 \newenvironment{claimproof}{\begin{proof}}{\end{proof}}
\newcommand{\B}{\mathcal{P}_{fin}(\mathbb{N})}
\author{Henry Towsner}
\title{A Simple Proof and Some Difficult Examples for Hindman's Theorem}
\date{\today}
\begin{abstract}
  We give a short, explicit proof of Hindman's Theorem that in every finite coloring of the integers, there is an infinite set all of whose finite sums have the same color.  We give several examples of colorings of the integers which do not have computable witnesses to Hindman's Theorem.
\end{abstract}
\begin{document}
\maketitle

\section{Introduction}
Hindman's Theorem is:
\begin{theorem}
  If $c:\mathbb{N}\rightarrow[1,r]$ is given then there are an $i\in[1,r]$ and an an infinite set $S$ such that $c(s)=i$ whenever $s$ is the sum of one or more distinct elements of $S$.
\end{theorem}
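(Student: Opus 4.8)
The plan is to prove the theorem by the Galvin--Glazer method: produce an idempotent ultrafilter on $\mathbb{N}$ and read the monochromatic set off from it. Concretely, I would first put on $\beta\mathbb{N}$, the space of ultrafilters on $\mathbb{N}$ with its usual compact Hausdorff topology (basic clopen sets $\overline{A}=\{p : A\in p\}$ for $A\subseteq\mathbb{N}$), the operation $p+q=\{A\subseteq\mathbb{N} : \{n : A-n\in q\}\in p\}$, where $A-n=\{m : m+n\in A\}$. The two routine verifications are that $+$ is associative and that for each fixed $q$ the map $p\mapsto p+q$ is continuous; thus $(\beta\mathbb{N},+)$ is a compact Hausdorff right-topological semigroup.

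The first substantive step is Ellis's lemma: such a semigroup has an idempotent. I would prove it by Zorn's lemma — take a minimal closed subsemigroup $S$; for $p\in S$ the set $Sp$ is a closed subsemigroup of $S$, so $Sp=S$ by minimality, whence $p=qp$ for some $q\in S$; then $\{r\in S : rp=p\}$ is a nonempty closed subsemigroup, hence equals $S$, so $p=pp$. This yields an idempotent $p=p+p$, which we may take nonprincipal, so every member of $p$ is infinite.

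For the extraction, note that exactly one color class $C=c^{-1}(i)$ lies in $p$. The workhorse observation is that if $A\in p$ then $A^{\star}:=\{n\in A : A-n\in p\}\in p$, since $A\in p=p+p$ forces $\{n : A-n\in p\}\in p$ and we intersect with $A$. Using this I would recursively build $a_{1}<a_{2}<\cdots$ together with sets $C=B_{0}\supseteq B_{1}\supseteq\cdots$, all lying in $p$ and all contained in $C$, maintaining the invariant that $\sum_{k\in F}a_{k}\in C$ and $\bigl(\sum_{k\in F}a_{k}\bigr)+B_{n}\subseteq C$ for every nonempty $F\subseteq\{1,\dots,n\}$: at stage $n+1$ pick $a_{n+1}\in B_{n}^{\star}$ with $a_{n+1}>a_{n}$ (possible since $B_{n}^{\star}$ is infinite) and set $B_{n+1}=B_{n}\cap(B_{n}-a_{n+1})$, which is in $p$ because $a_{n+1}\in B_{n}^{\star}$. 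A short case check confirms the invariant propagates, and then $S=\{a_{1},a_{2},\dots\}$ is infinite with all of its finite sums in $C=c^{-1}(i)$.

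The one genuinely non-clerical step is Ellis's lemma, i.e. manufacturing the idempotent ultrafilter; everything else is finite manipulation of the defining properties of $+$. Since this paper advertises an explicit, effective proof and then harvests recursion-theoretic lower bounds, I would expect the author's actual argument to avoid the ultrafilter — an object requiring the axiom of choice and far from computable — in favor of something hands-on built against the single given coloring $c$: perhaps a decreasing sequence of ``large'' sets carrying just enough idempotency-like closure to drive the recursion above, likely organized around the finite subsets of $\mathbb{N}$ hinted at by the macro $\B$. That replacement of the ultrafilter by an explicit device is exactly where I would expect the new idea, and the real work, to lie.
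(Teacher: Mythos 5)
Your proposal is a correct proof --- it is the Galvin--Glazer argument, and the details you give (Ellis's idempotent lemma via Zorn, the set $A^{\star}=\{n\in A : A-n\in p\}$, and the recursion $B_{n+1}=B_n\cap(B_n-a_{n+1})$ with the stated invariant) all check out; the only point you gloss over is why the idempotent may be taken nonprincipal (restrict to the closed subsemigroup $\beta\mathbb{N}\setminus\mathbb{N}$, or note that $n+n=n$ has no solution among positive integers). However, this is emphatically not the paper's route, and you correctly guessed why: the paper cites the Galvin--Glazer proof as the reverse-mathematically \emph{most} expensive of the three standard proofs, requiring a system beyond $\mathbf{\Pi^1_2\text{-}TI_0}$, whereas the whole point of the paper is an explicit proof formalizable in $\mathbf{ACA_0^+}$. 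The paper instead passes to the Finite Unions Theorem on $\mathcal{P}_{fin}(\mathbb{N})$ and replaces the idempotent ultrafilter by a hands-on device built against the given coloring: the notions of a finite set $\mathcal{B}$ \emph{half-matching} and \emph{full-matching} an IP set $\mathcal{T}$ (Lemmas \ref{badsequencelemma}--\ref{fullmatch}), obtained by pigeonhole over the $r$ colors and an iterated refinement, with the final monochromatic IP set extracted by K\"onig's Lemma from the tower of full-matched pairs. What your approach buys is brevity and conceptual transparency at the cost of heavy non-effective machinery (ultrafilters, Zorn); what the paper's approach buys is an explicit construction whose computational content can be audited, which is exactly what powers the lower-bound examples in Section 3. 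Your closing paragraph anticipates this trade-off accurately.
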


There are three standard proofs of Hindman's theorem: the original combinatorial argument (\citend{hindman74}), a streamlined combinatorial argument (\citend{baumgartner74}), and the Galvin-Glazer proof using ultrafilters (see \citend{comfort77} or \citend{hindman98}).  The original proof is generally considered quite difficult (see, for instance, the comments on it in \citend{Hindman2005}), but work in reverse mathematics shows that it is also, at least in the sense of reverse mathematics, the simplest of the three proofs.  Specifically, Blass, Hirst, and Simpson have shown (\citend{blass87}) that Hindman's proof can be formalized in the system $\mathbf{ACA_0^+}$, while Baumgartner's proof can be formalized in the stronger system $\mathbf{\Pi^1_2-TI_0}$.  The Galvin-Glazer proof was analyzed in (\citend{towsner09:hindman_unwind}), where an even stronger system was used to formalize it.  
  (The definitions and significance of all these systems of reverse mathematics may be found in (\citend{simpson99}).)

The work in \citend{towsner09:hindman_unwind} demonstrated a striking analogy between the structures of Baumgartner and Galvin-Glazer proofs: roughly speaking, both proofs prove an intermediate theorem that a structure weaker than that promised by Hindman's Theorem exists, then repeat the same argument with one step replaced by the intermediate theorem.  Hindman's proof does not have this structure, but comparison of the proofs suggests that the corresponding intermediate would be the structure given by Theorem \ref{halfmatch} below.  With the use of this intermediate, we can give a new proof similar to Hindman's which is provable in the slightly stronger system $\mathbf{ACA^+}$.

\citend{blass87} also gives a lower bound for the reverse mathematical strength of Hindman's Theorem by constructing a computable coloring such that $\mathbf{0}'$ is computable in any set witnessing Hindman's Theorem, one one such that no set witnessing Hindman's Theorem is $\Delta_2$.  In particular, Hindman's Theorem implies $\mathbf{ACA_0}$ over $\mathbf{RCA_0}$. We describe a flexible method for giving colorings for which Hindman's Theorem is difficult to solve, including examples which show that certain aspects of our proof are optimal.

We are grateful to Mathias Beiglb\"ock and Carl Mummert for many helpful discussions about the many facets of Hindman's Theorem.

\section{A Simple Proof of Hindman's Theorem}
It is standard (see \citend{baumgartner74}) to take advantage of the fact that Hindman's Theorem is equivalent to a similar statement about unions of finite sets.  We will freely equate $\mathcal{P}_{fin}(\mathbb{N})$ with $\mathbb{N}$, using the fact that there is a computable bijection between the two sets.
\begin{definition}
  If $\mathcal{S}\subseteq\mathcal{P}_{fin}(\mathbb{N})$, we write $NU(\mathcal{S})$ for the set of \emph{non-empty unions} from $\mathcal{S}$, those non-empty $T$ which are the union of finitely many elements of $\mathcal{S}$.

  We say $\mathcal{S}\subseteq\mathcal{P}_{fin}(\mathbb{N})$ is IP if it is closed under finite unions and contains an infinite set of pairwise disjoint elements.

  If $B\in\mathcal{S}$, we will write
\[\mathcal{S}-B:=\{T\in\mathcal{S}\mid T\cap B=\emptyset\},\]
and if $\mathcal{B}\subseteq\mathcal{S}$ then
\[\mathcal{S}-\mathcal{B}:=\mathcal{S}-\bigcup\mathcal{B}.\]
\end{definition}
Then subtraction is a strong form of set difference, where we remove not only $B$, but also anything that intersects $B$.

The following theorem is easily seen to imply Hindman's Theorem (consider the map taking a number $n$ to the set of places which are $1$ in the binary expansion of $n$).  (With more work, it can be seen to follow from Hindman's Theorem as well.)
\begin{theorem}[Finite Unions Theorem]
  If $c:\mathcal{P}_{fin}(\mathbb{N})\rightarrow[1,r]$ is given then there are an $i\in[1,r]$ and an IP set $\mathcal{S}$ such that $c(S)=i$ for every $S\in\mathcal{S}$.
\end{theorem}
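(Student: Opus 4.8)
The plan is to deduce the Finite Unions Theorem from the weaker ``half-matching'' structure of Theorem \ref{halfmatch} by a short inductive construction, in the spirit of Hindman's original argument. Concretely, I would build a sequence of pairwise disjoint finite sets $B_0, B_1, B_2, \ldots$ and fix a single color $i \in [1,r]$ so that $c\big(\bigcup_{n \in F} B_n\big) = i$ for every nonempty finite $F \subseteq \mathbb{N}$; then $\mathcal{S} := NU(\{B_n \mid n \in \mathbb{N}\})$ is an IP set --- it is closed under finite unions and the $B_n$ form an infinite pairwise disjoint family --- and $c(S) = i$ for every $S \in \mathcal{S}$, as required.

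The construction maintains a ``condition'': at stage $k$ I have $B_0, \ldots, B_{k-1}$ and an IP set $\mathcal{S}_k$ disjoint from each $B_n$ such that $c\big((\bigcup_{n \in F} B_n) \cup T\big) = i$ for every nonempty $F \subseteq [0,k)$ and every $T \in NU(\mathcal{S}_k) \cup \{\emptyset\}$; at stage $0$ the condition is vacuous, $\mathcal{S}_0 = \mathcal{P}_{fin}(\mathbb{N})$, and $i$ is not yet committed. The point is that to extend the condition it suffices to find $B_k \in \mathcal{S}_k$ with $c(B_k) = i$ and an IP set $\mathcal{S}_{k+1} \subseteq \mathcal{S}_k - B_k$ with $c(B_k \cup T) = i$ for all $T \in NU(\mathcal{S}_{k+1})$: every remaining instance of the stage-$(k+1)$ condition reduces to the stage-$k$ condition, because $B_k \cup T \in NU(\mathcal{S}_k)$ whenever $T \in NU(\mathcal{S}_{k+1}) \cup \{\emptyset\}$ (using that $\mathcal{S}_{k+1} \subseteq \mathcal{S}_k - B_k$ forces disjointness from $B_k$). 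Thus the whole theorem reduces to this single ``matching'' step, with the wrinkle that at stage $0$ we must also be free to choose $i$ and that the same $i$ must then serve at every later stage.

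To make the choice of $i$ persist I would use the notion that a color $i$ is \emph{large} for an IP set $\mathcal{S}$ if no IP subset of $\mathcal{S}$ omits $i$ altogether; a quick pigeonhole over the $r$ colors (if no color were large for $\mathcal{S}$, successively pass to IP subsets avoiding $1, 2, \ldots$ and arrive at an empty IP set, a contradiction) shows every IP set has an IP subset for which some color is large, and I would invoke this at stage $0$ to pick $i$ and shrink $\mathcal{S}_0$ accordingly. The extension step is then exactly what Theorem \ref{halfmatch} provides: from the largeness of $i$ for $\mathcal{S}_k$ it yields the block $B_k$ with $c(B_k) = i$ and the refinement $\mathcal{S}_{k+1}$, arranged so that $i$ is again large for $\mathcal{S}_{k+1}$ and the recursion continues. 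I expect the matching step to be the main obstacle: a single application of pigeonhole controls the color of $B_k \cup T$ for only finitely many $T$, not for an entire IP set's worth of them, and closing that gap requires the iterated shrinking of IP sets followed by a diagonal extraction --- the combinatorial core of Hindman's proof, and precisely what Theorem \ref{halfmatch} isolates. Since the only non-elementary ingredient is this countable iteration of nested IP sets, the resulting proof stays within $\mathbf{ACA_0^+}$.
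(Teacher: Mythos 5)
Your overall strategy --- build $B_0,B_1,\ldots$ one block at a time while maintaining an IP set $\mathcal{S}_k$ of permissible ``tails'' and using a largeness notion to fix the color $i$ once and for all --- is the Baumgartner/Galvin--Glazer template rather than the one this paper follows, and the point where you defer to Lemma \ref{halfmatch} is exactly where the argument has a gap. Lemma \ref{halfmatch} produces a \emph{finite collection} $\mathcal{B}$ and an IP set $\mathcal{T}$ such that for each $T\in\mathcal{T}$ there is \emph{some} $B\in\mathcal{B}$, depending on $T$, with $c(T)=c(B\cup T)$. It does not produce a single block; it does not make the matching element uniform over $\mathcal{T}$; it gives no control over $c(B)$; and it relates $c(B\cup T)$ to $c(T)$ rather than to a prescribed color $i$. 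Your extension step needs all four strengthenings simultaneously: one $B_k$ with $c(B_k)=i$ and $c(B_k\cup T)=i$ for \emph{every} $T$ in an entire IP set $\mathcal{S}_{k+1}$, with $i$ still large there. Upgrading ``some $B\in\mathcal{B}$ depending on $T$'' to ``one $B$ for all $T$ in an IP set'' is itself an instance of the Finite Unions Theorem applied to the induced coloring $T\mapsto(\text{which }B\text{ matches }T)$, so as written the step is circular. The paper's Section 3.2 shows there is no bound on the size of the finite set in Lemma \ref{halfmatch}, and Section 3.3 shows the dichotomy in Lemma \ref{fullmatchmaybe} cannot be dropped; both are evidence that no quick repair of this step is available.

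The paper avoids the problem by never establishing your strong per-stage invariant. It first upgrades half-matching to full-matching (Lemmas \ref{fullmatchmaybe} and \ref{fullmatch}, via the dichotomy with color-omitting IP subsets and induction on $r$), still only with a finite matching collection $\mathcal{B}_{i+1}$ whose matching element varies with $T$; it then iterates this infinitely often, recording the matching element inside a refined coloring $c_{i+1}$, and only at the very end extracts the sequence $\{B_i\}$ from the finite approximations by Weak K\"onig's Lemma. If you insist on your per-stage structure you are essentially committed to reconstructing Baumgartner's argument (finite matching families plus a delicate largeness-preservation argument), which, as the introduction notes, is only known to go through in the stronger system $\mathbf{\Pi^1_2\text{-}TI_0}$ --- so your closing claim that the proof stays within $\mathbf{ACA_0^+}$ would also need justification. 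The fix is either to prove your extension lemma from scratch, or to weaken the invariant to the paper's and postpone the extraction of the $B_i$ to a final compactness step.
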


We introduce two weak notions which will characterize our intermediate steps:
\begin{definition}
  We say $\mathcal{D}$ \emph{half-matches} $B$ if there is a $D\in \mathcal{D}$ such that $c(B)=c(D\cup B)$.  We say $\mathcal{D}$ half-matches a set $\mathcal{B}$ if $\mathcal{D}$ half-matches every $B\in\mathcal{B}$.

  We say $\mathcal{D}$ \emph{full-matches} $B$ if there is a $D\in \mathcal{D}$ such that $c(D)=c(B)=c(D\cup B)$.  We say $\mathcal{D}$ full-matches a set $\mathcal{B}$ if $\mathcal{D}$ full-matches every $B\in\mathcal{B}$.
\end{definition}

\begin{lemma}[$\mathbf{RCA_0}$]
  Let $\mathcal{S}$ be an IP set, let $\mathcal{B}\subseteq \mathcal{S}$ be finite, and let $c:NU(\mathcal{S})\rightarrow [1,r]$ be given.  Then either:
  \begin{itemize}
    \item There is a finite $\mathcal{D}\subseteq \mathcal{S}-\mathcal{B}$ such that for every $S\in \mathcal{S}-\mathcal{B}-\mathcal{D}$, there is a $D\in NU(\mathcal{D})$ such $\mathcal{B}$ does not half-match $D\cup S$, or
    \item There is an IP set $\mathcal{T}\subseteq\mathcal{S}-\mathcal{B}$ such that $\mathcal{B}$ half-matches $\mathcal{T}$.
  \end{itemize}
\label{badsequencelemma}
\end{lemma}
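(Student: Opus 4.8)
The plan is to prove the dichotomy constructively: I will try to build an IP set of the sort promised by the second alternative, and show that the construction can only break down in a way that directly yields the first alternative. Using that $\mathcal{S}$ is IP, fix an infinite pairwise disjoint family $X \subseteq \mathcal{S}$. I build a sequence $T_1, T_2, \ldots$ of nonempty, pairwise disjoint members of $\mathcal{S} - \mathcal{B}$ by recursion. At stage $j$, first pick a nonempty $E_j \in X$ that is disjoint from $\bigcup\mathcal{B}$ and from $T_1 \cup \cdots \cup T_{j-1}$ and has $\min E_j$ above every number mentioned so far (possible, since only finitely many members of $X$ can meet a fixed finite set or begin below a fixed bound), and put $\mathcal{D}_j := \{T_1, \ldots, T_{j-1}, E_j\}$. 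Then search for some $S \in \mathcal{S} - \mathcal{B} - \mathcal{D}_j$ such that $\mathcal{B}$ half-matches $D \cup S$ for every $D \in NU(\mathcal{D}_j)$; if such an $S$ is found, set $T_j := E_j \cup S$, and otherwise stop.

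The point that makes this effective is that $\mathcal{D}_j$ is finite, hence $NU(\mathcal{D}_j)$ is finite, so ``$\mathcal{B}$ half-matches $D \cup S$ for every $D \in NU(\mathcal{D}_j)$'' is a decidable condition on $S$ (relative to $\mathcal{S}$ and $c$); the stage-$j$ search therefore genuinely terminates whenever a suitable $S$ exists, and the whole recursion goes through in $\mathbf{RCA_0}$. Insisting that $\min E_j$ increase forces the codes of the $T_j$ to be strictly increasing, which makes membership in the set $\mathcal{T} := \{\bigcup_{i \in F} T_i : F \subseteq \mathbb{N},\ 2 \le |F| < \infty\}$ decidable from the sequence $\langle T_j \rangle$, so $\mathcal{T}$ is a legitimate set.

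If the construction stops at some stage $j$, then, unwinding the failure of the search, $\mathcal{D} := \mathcal{D}_j$ is a finite subset of $\mathcal{S} - \mathcal{B}$ witnessing the first alternative. If it never stops, it produces $\langle T_j \rangle$ and hence $\mathcal{T}$, which is closed under finite unions, contains the infinite pairwise disjoint family $\{T_1 \cup T_2, T_3 \cup T_4, \ldots\}$, and lies inside $\mathcal{S} - \mathcal{B}$; so $\mathcal{T}$ is IP. To see that $\mathcal{B}$ half-matches $\mathcal{T}$, take $T = \bigcup_{i \in F} T_i$ with $|F| \ge 2$, let $m = \max F$, and set $D := \bigcup_{i \in F,\ i < m} T_i \in NU(\{T_1, \ldots, T_{m-1}\})$; since $T_m = E_m \cup S_m$ with $S_m$ chosen so that $\mathcal{B}$ half-matches $D' \cup S_m$ for every $D' \in NU(\mathcal{D}_m)$, and since $D \cup E_m \in NU(\mathcal{D}_m)$, it follows that $\mathcal{B}$ half-matches $(D \cup E_m) \cup S_m = D \cup T_m = T$.

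The step I expect to take the most care is the bookkeeping needed to keep everything inside $\mathbf{RCA_0}$: recognising that the finiteness of $NU(\mathcal{D}_j)$ is precisely what makes the stage-$j$ search decidable, checking that the ``increasing $\min$'' requirement can always be met and really turns $\mathcal{T}$ into a $\Delta^0_1$ set, and disposing of the degenerate case in which the witness $S$ is empty (which is why $T_j$ is taken to be $E_j \cup S$ rather than $S$ itself). Once this scaffolding is in place, both the extraction of the first alternative from a failed search and the ``peel off the largest block'' computation above are short.
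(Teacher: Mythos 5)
Your proposal is correct and follows essentially the same strategy as the paper's proof: repeatedly exploit the failure of the first alternative to grow a finite family whose nontrivial unions are all half-matched by $\mathcal{B}$, then pass to an IP set consisting only of such nontrivial unions (your padding of each block with $E_j$, together with the restriction $|F|\geq 2$, plays the same role as the paper's final pairing $D_{2i}\cup D_{2i+1}$). The extra attention to effectivity of the stage-by-stage search is a welcome refinement but not a different argument.
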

\begin{proof}
  Suppose the first condition fails; that is, for any finite $\mathcal{D}\subseteq\mathcal{S}-\mathcal{B}$, there is an $S\in\mathcal{S}-\mathcal{B}-\mathcal{D}$ such that $\mathcal{B}$ half-matches $D\cup S$ for every $D\in NU(\mathcal{D})$.

  We inductively construct a sequence $\mathcal{D}_0\subseteq\mathcal{D}_1\subseteq\cdots$ of finite subsets of $\mathcal{S}-\mathcal{B}$ such that whenever $D\in NU(\mathcal{D}_n)\setminus\mathcal{D}_n$, $\mathcal{B}$ half-matches $D$.  Set $\mathcal{D}_0:=\{D_0\}$ for an arbitrary $D_0\in\mathcal{S}-\mathcal{B}$.  Given $\mathcal{D}_n$, since the first condition fails and $NU(\mathcal{D}_n)$ is finite, there is an $S\in\mathcal{S}-\mathcal{B}-\mathcal{D}_n$ such that for every $D\in NU(\mathcal{D}_n)$, $\mathcal{B}$ half-matches $D\cup S$.  Let $\mathcal{D}_{n+1}:=\mathcal{D}_n\cup\{S\}$.  Then for any $D\in NU(\mathcal{D}_{n+1})\setminus\mathcal{D}_{n+1}$, either $D\in NU(\mathcal{D}_n)\setminus\mathcal{D}_n$, in which case $\mathcal{B}$ half-matches $D$ by IH, or $D=D'\cup S$ for some $D'\in\mathcal{D}_n$, in which case $\mathcal{B}$ half-matches $D$ by choice of $S$.

  Let $\mathcal{D}:=\bigcup_n\mathcal{D}_n=\{D_0,D_1,\ldots\}$.  Let $\mathcal{D}':=\{D_{2i}\cup D_{2i+1}\mid i\in\mathbb{N}\}$.  Then if $D\in NU(\mathcal{D}')$, $D\in NU(\mathcal{D}_n)\setminus\mathcal{D}_n$ for some $n$, so $\mathcal{B}$ half-matches $D$.
\end{proof}

\begin{lemma}[$\mathbf{RCA}$]
  If $\mathcal{S}$ is an IP set and $c:NU(\mathcal{S})\rightarrow [1,r]$ then there is a finite collection $\mathcal{B}\subseteq \mathcal{S}$ and an IP set $\mathcal{T}\subseteq \mathcal{S}-\mathcal{B}$ such that $\mathcal{B}$ half-matches $\mathcal{T}$.
\label{halfmatch}
\end{lemma}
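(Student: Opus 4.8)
The plan is to argue by induction on the number of colours $r$. When $r=1$ the colouring is constant, and then for any $E\in\mathcal{S}$ the pair $\mathcal{B}=\{E\}$, $\mathcal{T}=\mathcal{S}-E$ works: $\mathcal{S}-E$ is again IP and $c(E\cup T)=c(T)$ trivially. For the inductive step, assume the statement for $r-1$ colours and suppose, towards a contradiction, that it fails for some IP set $\mathcal{S}$ and $c:NU(\mathcal{S})\to[1,r]$. Then for every finite $\mathcal{B}\subseteq\mathcal{S}$ the second alternative of Lemma \ref{badsequencelemma} fails, so its first alternative holds.

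Using this I would recursively build pairwise disjoint $G_0,G_1,\dots\in\mathcal{S}$ such that
\[ c(B\cup G_{i+1})\neq c(G_{i+1})\text{ for every }i\text{ and every }B\in NU(\{G_0,\dots,G_i\}); \]
call this property $(\star)$. Take $G_0\in\mathcal{S}$ arbitrary. Given $G_0,\dots,G_i$, set $\mathcal{B}_i:=NU(\{G_0,\dots,G_i\})$ and invoke the first alternative of Lemma \ref{badsequencelemma}: there is a finite $\mathcal{D}\subseteq\mathcal{S}-\mathcal{B}_i$ so that some $E\in\mathcal{S}-\mathcal{B}_i-\mathcal{D}$ has a $D\in NU(\mathcal{D})$ with $\mathcal{B}_i$ not half-matching $D\cup E$. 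Putting $G_{i+1}:=D\cup E$ gives an element of $\mathcal{S}$ disjoint from $G_0\cup\dots\cup G_i$, and since $\mathcal{B}_i$ failing to half-match $G_{i+1}$ means exactly that $c(B\cup G_{i+1})\neq c(G_{i+1})$ for all $B\in\mathcal{B}_i$, this is the instance of $(\star)$ for stage $i$. (To stay in $\mathbf{RCA_0}$, take at each stage the least $G_{i+1}$ with these finitely checkable properties; Lemma \ref{badsequencelemma} guarantees one exists.)

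Next, apply the pigeonhole principle for $r$ colours (available in $\mathbf{RCA_0}$ for fixed $r$) to pass to an infinite subsequence along which $c(G_i)$ is a constant value $a$; property $(\star)$ is inherited by any subsequence, since a union of earlier terms of the subsequence is still a union of $G_j$'s with indices below that of the new term. After renaming, we have pairwise disjoint $G_0,G_1,\dots$ with $c(G_i)=a$ and $(\star)$. If $X=G_{j_1}\cup\dots\cup G_{j_m}$ with $j_1<\dots<j_m$ and $m\ge2$, then applying $(\star)$ with $B:=G_{j_1}\cup\dots\cup G_{j_{m-1}}$ (an element of $NU(\{G_0,\dots,G_{j_m-1}\})$) and new term $G_{j_m}$ gives $c(X)\neq c(G_{j_m})=a$. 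Hence the collection $\mathcal{U}$ of all unions of at least two of the $G_i$ is an IP set — it is closed under unions and contains the pairwise disjoint family $G_0\cup G_1,\ G_2\cup G_3,\dots$ — and $c$ avoids the colour $a$ on $NU(\mathcal{U})=\mathcal{U}$, so $c|_{\mathcal{U}}$ is, up to relabelling, an $(r-1)$-colouring.

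Finally, apply the induction hypothesis to $\mathcal{U}$ with $c|_{\mathcal{U}}$: there are a finite $\mathcal{B}\subseteq\mathcal{U}$ and an IP set $\mathcal{T}\subseteq\mathcal{U}-\mathcal{B}$ with $\mathcal{B}$ half-matching $\mathcal{T}$. Since $\mathcal{U}\subseteq\mathcal{S}$, we have $\mathcal{B}\subseteq\mathcal{S}$ and $\mathcal{T}\subseteq\mathcal{S}-\mathcal{B}$, so $(\mathcal{B},\mathcal{T})$ already witnesses the conclusion for $(\mathcal{S},c)$, a contradiction. The step I expect to be the real obstacle is arranging $(\star)$ with exactly the right strength: it must hold for all $B\in NU(\{G_0,\dots,G_i\})$, not merely for initial segments, so that it persists under passage to a monochromatic subsequence and then rules out the colour $a$ on an entire IP set rather than on a single pairwise disjoint family — this is precisely what makes the reduction in the number of colours go through. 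The remaining verifications (disjointness, the IP-ness of $\mathcal{U}$, the $\mathbf{RCA_0}$ bookkeeping, and the insensitivity of half-matching to relabelling of colours) are routine.
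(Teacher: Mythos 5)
Your argument is correct, but it is not the paper's argument, and the difference matters for the $\mathbf{RCA_0}$ label on the lemma. You prove the statement by induction on $r$: assuming failure, you build pairwise disjoint $G_0,G_1,\ldots$ satisfying $(\star)$, pigeonhole to a constant colour $a$ on the $G_i$, note that every union of at least two of them then avoids $a$, and recurse on the resulting IP set with $r-1$ colours. That is essentially the colour-elimination scheme the paper reserves for Lemmas \ref{fullmatchmaybe} and \ref{fullmatch}, which are tagged $\mathbf{ACA_0}$. The paper's own proof of Lemma \ref{halfmatch} is instead a single pass: starting from $\mathcal{B}_1=\{Q\}$ it iterates Lemma \ref{badsequencelemma} at most $r$ times, setting $\mathcal{B}_{i+1}:=NU(\mathcal{B}_i\cup\mathcal{D}_{i+1})$, and if no stage terminates then for each $S$ in the final IP set the sets $S,\ S\cup D_r,\ S\cup D_r\cup D_{r-1},\ldots$ are pairwise differently coloured unless one of them agrees with $c(S)$ --- a \emph{finite} pigeonhole applied pointwise to each $S$, which directly yields that $\mathcal{B}_r$ half-matches $S$. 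What the paper's route buys is exactly the base-system claim: it uses no infinite pigeonhole and no induction on $r$ over a $\Pi^1_2$ statement, whereas your route uses both ($\mathsf{RT}^1_r$ to locate the colour occurring infinitely often, which for variable $r$ is a bounding principle beyond $\mathbf{RCA_0}$, and an induction whose hypothesis is $\Pi^1_2$). For each standard $r$ your proof is sound, and under an $\mathbf{ACA_0}$ label it would be unobjectionable, but as a proof of the lemma as stated it overspends. A smaller observation: the only thing you extract from Lemma \ref{badsequencelemma} is a single $G_{i+1}\in\mathcal{S}-\mathcal{B}_i$ that $\mathcal{B}_i$ fails to half-match, and that already follows from the failure of the lemma applied to the IP set $\mathcal{T}=\mathcal{S}-\mathcal{B}_i$; the detour through the $\mathcal{D}$, $E$, $D\cup E$ decomposition is harmless but unnecessary. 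You should also, for the $\mathbf{RCA_0}$ bookkeeping, thin the $G_i$ to a block sequence ($\max G_i<\min G_{i+1}$) so that membership in your $\mathcal{U}$ is decidable.
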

\begin{proof}
  Pick an arbitrary element $Q\in\mathcal{S}$, and set $\mathcal{B}_1:=\{Q\}$ and $\mathcal{S}'_1:=\mathcal{S}-\mathcal{B}_1$.  Given $\mathcal{B}_i,\mathcal{S}'_i$, apply Lemma \ref{badsequencelemma}.  If the second condition holds, we are finished.  Otherwise let $\mathcal{D}_{i+1}$ be given by the first part, let $\mathcal{B}_{i+1}:=NU(\mathcal{B}_i\cup\mathcal{D}_{i+1})$, and let $\mathcal{S}'_{i+1}:=\mathcal{S}'_i-\mathcal{B}_{i+1}$.  

Suppose that we reach $\mathcal{B}_r,\mathcal{S}'_r$ without terminating.  Then for any $S\in \mathcal{S}'_r$, we may choose a sequence $D_r,\ldots,D_2$ with $D_i\in NU(\mathcal{D}_i)$ and for each $i$, $\mathcal{B}_{i-1}$ fails to half-match $S\cup\bigcup_{j=i}^r D_j$.  Let $D_1:=Q$.  Then for each $i< i'$, since $\bigcup_{j=i}^{i'-1}D_j\in \mathcal{B}_{i'-1}$, $c(S\cup\bigcup_{j=i}^r D_j)\neq c(S\cup \bigcup_{j=i'}^r D_j)$.  But since there are $r$ colors, there must be some $i$ such that $c(S)=c(S\cup\bigcup_{j=i}^rD_j)$.  Therefore we may take $\mathcal{B}:=\mathcal{B}_r$ and $\mathcal{T}:=\mathcal{S}'_r$.
\end{proof}


\begin{lemma}[$\mathbf{ACA}$]
    Let $\mathcal{S}$ be an IP set and let $c:\mathcal{S}\rightarrow [1,r]$ be given.  Then either:
  \begin{itemize}
  \item There is an IP $\mathcal{S}'\subseteq \mathcal{S}$ and some $i\in[1,r]$ such that $c(S)\neq i$ for every $S\in \mathcal{S}'$, or
  \item There is a finite collection $\mathcal{B}\subseteq \mathcal{S}$ and an IP set $\mathcal{T}\subseteq \mathcal{S}-\mathcal{B}$ such that $\mathcal{B}$ full-matches $\mathcal{T}$.
  \end{itemize}
\label{fullmatchmaybe}
\end{lemma}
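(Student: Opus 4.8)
The plan is to imitate the proof of Lemma~\ref{halfmatch} with ``full-matches'' in place of ``half-matches'' throughout. The first thing to notice is that the proof of Lemma~\ref{badsequencelemma} uses nothing about the relation ``$\mathcal{D}$ half-matches $X$'' except that it is a fixed property of $X$; consequently the identical argument proves the version of Lemma~\ref{badsequencelemma} in which ``half-matches'' is replaced by ``full-matches'' everywhere (we may freely restrict $c$ to $\mathcal{S}$, since the IP set $\mathcal{S}$ already contains $NU(\mathcal{S})$). Call this the \emph{full-matching form} of Lemma~\ref{badsequencelemma}.

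I would then run the iteration from the proof of Lemma~\ref{halfmatch} word for word, but invoking the full-matching form of Lemma~\ref{badsequencelemma} at each stage: set $\mathcal{B}_1=\{Q\}$, $\mathcal{S}'_1=\mathcal{S}-\mathcal{B}_1$, and given $\mathcal{B}_i,\mathcal{S}'_i$ apply it to $\mathcal{S}$ and $\mathcal{B}_i$. If its second clause ever holds we get a finite $\mathcal{B}\subseteq\mathcal{S}$ and an IP set $\mathcal{T}\subseteq\mathcal{S}-\mathcal{B}$ with $\mathcal{B}$ full-matching $\mathcal{T}$, which is the second alternative and we are done. Otherwise the first clause holds at every stage, giving $\mathcal{D}_{i+1}$, $\mathcal{B}_{i+1}:=NU(\mathcal{B}_i\cup\mathcal{D}_{i+1})$ and $\mathcal{S}'_{i+1}:=\mathcal{S}'_i-\mathcal{B}_{i+1}$; as in Lemma~\ref{halfmatch} each $\mathcal{S}'_i$ is again IP, and for $S\in\mathcal{S}'_i$ one extracts a ``bad chain'' $D_i,\dots,D_1$ (with $D_1=Q$, $D_j\in NU(\mathcal{D}_j)$) so that, writing $W_j:=S\cup\bigcup_{k=j}^{i}D_k$ and $E_{j,j'}:=\bigcup_{k=j}^{j'-1}D_k\in\mathcal{B}_{j'-1}$, it is \emph{not} the case that $c(E_{j,j'})=c(W_{j'})=c(W_j)$ for any $1\le j<j'\le i$. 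Actually performing this recursion means finding the finite objects that the first clause only asserts to exist, and their defining property is arithmetical in the previous stage's data; this is where $\mathbf{ACA_0}$ first enters, and it is enough to run the iteration for as many stages as we want.

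The real obstacle is the case in which the iteration never reaches the second clause. The colour count that finishes Lemma~\ref{halfmatch} no longer goes through: there, failing to half-match $W_{j'}$ gave $c(W_j)\neq c(W_{j'})$ outright, so more than $r$ of the $W_j$ sufficed; here, failing to full-match $W_{j'}$ only says that $c(E_{j,j'}),c(W_{j'}),c(W_j)$ are not all equal, so a collision $c(W_j)=c(W_{j'})=:\delta$ forces merely $c(E_{j,j'})\neq\delta$. Taking the number of stages $N$ large relative to $r$ and applying the (finite) Ramsey theorem for pairs to the partition of $\{1,\dots,N\}$ by whether $c(W_j)=c(W_{j'})$, and noting that the ``$\neq$'' homogeneous class has at most $r$ elements, one obtains a long $h_1<\dots<h_m$ with all $c(W_{h_k})$ equal to a single colour $\delta$ and every consecutive block $\bigcup_{k=h_a}^{h_b-1}D_k$ (hence every $E_{h_a,h_b}$) of colour $\neq\delta$. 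The step I expect to be genuinely hard --- and the essential use of $\mathbf{ACA_0}$ --- is to promote this ``all consecutive block unions avoid $\delta$'' to ``all finite unions of some infinite family avoid $\delta$'', i.e.\ the first alternative: a naive fusion fails because unions of non-consecutive blocks are not controlled, so one must instead build the avoiding family by a more careful tree/König's-lemma style construction drawing blocks from the bad chains of infinitely many elements of the $\mathcal{S}'_i$. This promotion is the crux of the argument, and it is exactly the point which the colourings constructed later in the paper are meant to show cannot be done below $\mathbf{ACA_0}$.
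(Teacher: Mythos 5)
Your proposal correctly locates the difficulty but does not overcome it, and the route you have set up does not obviously lead to a resolution. Running the full-matching analogue of Lemma \ref{badsequencelemma} through the iteration of Lemma \ref{halfmatch} leaves you, as you yourself observe, with control only over \emph{consecutive} block unions $E_{j,j'}$: a collision $c(W_j)=c(W_{j'})=\delta$ forces $c(E_{j,j'})\neq\delta$, but nothing constrains $c(E_{h_1,h_2}\cup E_{h_3,h_4})$. The final sentence of your proposal --- that one should ``promote'' this to an infinite family all of whose finite unions avoid $\delta$ by a more careful tree/K\"onig's-lemma construction --- is precisely the missing proof, not a sketch of it; and the objects you have built do not clearly support such a construction, since the blocks $E_{j,j'}$ come from a single bad chain attached to a single $S\in\mathcal{S}'_N$, and different elements of $\mathcal{S}'_N$ produce unrelated chains, so there is no evident finitely-branching tree whose infinite branch yields the first alternative.

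The paper closes this gap by a different device. It does not replace half-matching by full-matching inside Lemma \ref{badsequencelemma}; instead it iterates Lemma \ref{halfmatch} itself, but with \emph{refined colorings}: $c_{i+1}(S):=\langle B,c_i(S)\rangle$ where $B\in\mathcal{B}_{i+1}$ is the half-match witness for $S$ under $c_i$. Because $c_{i+1}$ records the witness, a half-match at the next level with respect to $c_{i+1}$ forces $S$ and $S\cup B'$ to use the \emph{same} lower-level witness; unwinding this, every $T\in\mathcal{T}_n$ admits a sequence $B_1\in\mathcal{B}_1,\dots,B_n\in\mathcal{B}_n$ with $c(T)=c(T\cup B)$ for \emph{every} $B\in NU(\{B_1,\dots,B_n\})$ --- arbitrary unions, not merely consecutive ones. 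If no stage yields full-matching, one picks $T_n\in\mathcal{T}_n$ witnessing the failure, thins by pigeonhole to a subsequence with $c(T_{i_n})=q$ constant, and concludes $c(B)\neq q$ for every $B\in NU(\{B_1,\dots,B_{i_n}\})$, since otherwise $B$ would full-match $T_{i_n}$. The tree of such finite witness sequences is finitely branching (each $\mathcal{B}_i$ is finite) and has arbitrarily long branches, so K\"onig's Lemma assembles an infinite sequence whose nonempty unions all avoid $q$, giving the first alternative. This coherence across levels, supplied by the product colorings, is the idea your proposal lacks.
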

\begin{proof}
  Construct sequences $\mathcal{B}_2,\ldots,\mathcal{B}_n,\ldots$, $\mathcal{T}_1,\ldots,\mathcal{T}_n,\ldots$, and colorings $c_1,\ldots,c_n,\ldots$ as follows: let $c_1:=c$ and $\mathcal{T}_1:=\mathcal{S}$.  Given $c_i,\mathcal{T}_i$, let $\mathcal{B}_{i+1},\mathcal{T}_{i+1}$ be the witness given by Lemma \ref{halfmatch}.  Define $c_{i+1}$ on $\mathcal{T}_{i+1}$ by setting $c_{i+1}(S):=\langle B,c_i(S)\rangle$ where $B\in\mathcal{B}_{i+1}$ is such that $c_i(S)=c_i(S\cup B)$.

If there is some $n$ such that for every $S\in\mathcal{T}_n$ there is a $B\in NU(\bigcup_{i\leq n}\mathcal{B}_i)$ such that $c(S)=c(B)=c(S\cup B)$ then $\mathcal{T}_n$ and $\bigcup_{i\leq n}\mathcal{B}_i$ witness the second possibility.

Otherwise, for each $n$ we may choose a $T_n\in\mathcal{T}_n$ such that there is no $B\in NU(\bigcup_{i\leq n}\mathcal{B}_i)$ such that $c(T_n)=c(B)=c(T_n\cup B)$.  By the pigeonhole principle, we may choose an infinite subsequence $\{T_{i_n}\}$ such that $c$ is constantly some fixed $q\in[1,r]$ on $\{T_{i_n}\}$ (but not necessarily on $NU(\{T_{i_n}\})$).  For each $T_{i_n}$, we may choose a sequence $B_1\in\mathcal{B}_1,\ldots,B_{i_n}\in\mathcal{B}_{i_n}$ such that $c(T_{i_n})=c(T_{i_n}\cup B)$ for every $B\in NU(\{B_i\})$.  In particular, it must be that $c(B)\neq q$.

Then by K\"onig's Lemma, we may choose an infinite sequence $\{B_i\}$ such that $c(B)\neq q$ for any $B\in NU(\{B_1,\ldots,B_n,\ldots\})$.
\end{proof}
Note that, when the second clause holds in the preceeding lemma, the set $\mathcal{T}$ is computable from $c$ and $\mathcal{S}$.

\begin{lemma}[$\mathbf{ACA}$]
    Let $\mathcal{S}$ be an IP set and let $c:\mathcal{S}\rightarrow [1,r]$ be given.  Then either:
  \begin{itemize}
  \item There is an IP $\mathcal{S}'\subseteq \mathcal{S}$ such that $c$ is constant on $\mathcal{S}'$, or
  \item There is a finite collection $\mathcal{B}\subseteq \mathcal{S}$ and an IP set $\mathcal{T}\subseteq \mathcal{S}-\mathcal{B}$ such that $\mathcal{B}$ full-matches $\mathcal{T}$.
  \end{itemize}
\label{fullmatch}
\end{lemma}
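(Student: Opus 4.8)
The plan is to iterate Lemma~\ref{fullmatchmaybe}, each time using its first clause to throw away one color, until either its second clause fires or only a single color is left.

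Concretely, I would build a decreasing sequence of IP sets $\mathcal{S} = \mathcal{S}_1 \supseteq \mathcal{S}_2 \supseteq \cdots$ together with a count $m_1 \geq m_2 \geq \cdots$ of how many colors $c$ realizes on each. Given $\mathcal{S}_k$, let $m_k$ be the number of values $c$ takes on $\mathcal{S}_k$, and regard $c$ restricted to $\mathcal{S}_k$, after relabelling the colors it actually uses, as a coloring into $[1,m_k]$. Applying Lemma~\ref{fullmatchmaybe} to $\mathcal{S}_k$ with this $m_k$-coloring, there are two cases. If the second clause holds, it produces a finite $\mathcal{B} \subseteq \mathcal{S}_k \subseteq \mathcal{S}$ and an IP set $\mathcal{T} \subseteq \mathcal{S}_k - \mathcal{B} \subseteq \mathcal{S} - \mathcal{B}$ with $\mathcal{B}$ full-matching $\mathcal{T}$; because $\mathcal{B}$ and $\mathcal{T}$ lie inside $\mathcal{S}_k$, which is closed under finite unions, the witnessing equalities $c(D) = c(B) = c(D \cup B)$ survive undoing the relabelling, so this is exactly the second alternative we want and the construction stops. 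If instead the first clause holds, it yields an IP set $\mathcal{S}_{k+1} \subseteq \mathcal{S}_k$ on which $c$ misses one further color, so $m_{k+1} \leq m_k - 1$.

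Since $m_1 \leq r$ and the $m_k$ strictly decrease as long as we keep going, after at most $r-1$ steps either the second alternative has fired, or we reach an IP set $\mathcal{S}_k \subseteq \mathcal{S}$ with $m_k = 1$ --- that is, $c$ is constant on $\mathcal{S}_k$ --- which is the first alternative. Because $r$ is fixed, this is only a bounded iteration of the $\mathbf{ACA_0}$ construction of Lemma~\ref{fullmatchmaybe}, so the whole argument stays in $\mathbf{ACA_0}$. I expect no serious obstacle here: the only points needing care are tracking that the color count drops strictly at each stage, so that the iteration is guaranteed to terminate, and checking that a full-match witness for the restricted, relabelled coloring is also a full-match witness for the original $c$.
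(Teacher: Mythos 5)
Your proposal is correct and takes essentially the same approach as the paper: the paper phrases the same argument as an induction on $r$, applying Lemma~\ref{fullmatchmaybe} once per step and either terminating in the second clause or passing to an IP subset realizing one fewer color. Your explicit note that the full-match witness survives the relabelling is a detail the paper leaves implicit, but the substance is identical.
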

\begin{proof}
  By induction on $r$.  When $r=1$, the first condition holds immediately.  If the claim holds for $r$ and $c:NU(\mathcal{S})\rightarrow [1,r+1]$, we may apply Lemma \ref{fullmatchmaybe} and either reduce to IH or immediately give the second case.
\end{proof}

\begin{theorem}[$\mathbf{ACA^+}$]
    If $c:\mathcal{P}_{fin}(\mathbb{N})\rightarrow[1,r]$ is given then there are an $i\in[1,r]$ and an IP set $\mathcal{S}$ such that $c(S)=i$ for every $S\in\mathcal{S}$.
\end{theorem}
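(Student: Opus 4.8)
The plan is to iterate Lemma~\ref{fullmatch} along $\omega$ and extract a monochromatic IP set from the resulting tower. Put $\mathcal{S}_1:=\B$, which is IP, and given an IP set $\mathcal{S}_i$ apply Lemma~\ref{fullmatch} to the restriction of $c$ to $\mathcal{S}_i$: if its first alternative holds we obtain an IP set on which $c$ is constant and are finished, so assume that for every $i$ the second alternative holds, producing a finite $\mathcal{D}_{i+1}\subseteq\mathcal{S}_i$ and an IP set $\mathcal{S}_{i+1}\subseteq\mathcal{S}_i-\mathcal{D}_{i+1}$ with $\mathcal{D}_{i+1}$ full-matching $\mathcal{S}_{i+1}$, and pass to stage $i+1$. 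The remark after Lemma~\ref{fullmatchmaybe} — that the IP set in the second alternative is obtained effectively — is what makes this an honest $\omega$-fold iteration of an arithmetical operation on $\mathcal{S}_i$ and $c$, and forming the whole sequence $\langle(\mathcal{D}_i,\mathcal{S}_i)\rangle_i$ at once is exactly what $\mathbf{ACA_0^+}$ supplies over $\mathbf{ACA_0}$ — the same $\omega$-iteration that makes Hindman's original proof land in $\mathbf{ACA_0^+}$.

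Granting the tower, I would first record its bookkeeping: $\mathcal{S}_1\supseteq\mathcal{S}_2\supseteq\cdots$, each $\mathcal{S}_j$ is closed under finite unions, elements of $\mathcal{D}_i$ and of $\mathcal{D}_j$ are disjoint for $i\neq j$ (if $i>j$ then $\mathcal{D}_i\subseteq\mathcal{S}_{i-1}\subseteq\mathcal{S}_j$, and $\mathcal{S}_j$ is disjoint from $\mathcal{D}_j$), and consequently $\mathcal{D}_i$ full-matches every element of $NU(\bigcup_{j>i}\mathcal{D}_j)$ and of every $\mathcal{S}_j$ with $j>i$, since all such elements lie in the IP set $\mathcal{S}_i$. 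From here the aim is to imitate the Galvin--Glazer selection: build an infinite pairwise disjoint sequence $x_1,x_2,\dots$, with $x_k$ drawn from deep in the tower, maintaining a stage $m_k$ and the invariant that every nonempty union of $x_1,\dots,x_k$ has one fixed color $i^\ast$ and is full-matched into level $m_k$; to adjoin $x_{k+1}$, descend far enough past $m_k$ that the full-matchings available at the intervening levels, together with a pigeonhole over the $r$ colors (as used in the proof of Lemma~\ref{halfmatch}), pin down a set whose union with each of the finitely — though exponentially — many unions created by $x_{k+1}$ again has color $i^\ast$. Then $\mathcal{S}:=NU(\{x_1,x_2,\dots\})$ is an IP set with $c(S)=i^\ast$ for all $S\in\mathcal{S}$, and the remark at the start of Section~2 converts this to Hindman's Theorem as originally stated.

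The step I expect to be the real obstacle is exactly this extraction. Full-matching is a $\forall S\,\exists D$ statement, whereas extending the chosen sequence seems, at first sight, to want an $\exists D\,\forall S$ statement controlling all of the exponentially many new unions simultaneously; the construction therefore has to be arranged so that the union of $x_{k+1}$ with each already-chosen union is handled at its own level of the tower, with the colors forced into agreement by running out of colors rather than by any single uniform choice, and so that the descent needed to adjoin each new element still leaves an IP set below. The preparatory bookkeeping (IP-ness of the $NU$'s, disjointness across the $\mathcal{D}_i$, persistence of full-matching up the tower) is routine, and essentially all of the $\mathbf{ACA_0^+}$ content sits in forming the $\omega$-tower at the outset.
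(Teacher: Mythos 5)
You have the right tower but not the right extraction, and the step you flag as ``the real obstacle'' is in fact a genuine gap: nothing in your proposal resolves the $\forall S\,\exists D$ versus $\exists D\,\forall S$ mismatch. The pigeonhole over the $r$ colors cannot do it --- that pigeonhole is already spent inside the proof of Lemma~\ref{halfmatch}, and there it only produces, for each individual $S$, some matching set depending on $S$; it never produces one set that works for all the exponentially many unions created when you adjoin $x_{k+1}$. A Galvin--Glazer-style selection of elements ``drawn from deep in the tower'' is exactly the move that the paper's analysis shows costs more than $\mathbf{ACA_0^+}$, so it is the wrong template here.

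The missing idea is that the iteration must be applied to a \emph{refined} coloring, not to the original $c$ restricted to $\mathcal{S}_i$. In the paper, after obtaining $\mathcal{B}_{i+1}$ full-matching $\mathcal{T}_{i+1}$ with respect to $c_i$, one sets $c_{i+1}(S):=\langle B,c_i(S)\rangle$ where $B\in\mathcal{B}_{i+1}$ is the witness with $c_i(S)=c_i(B)=c_i(S\cup B)$, and the next application of Lemma~\ref{fullmatch} is to $c_{i+1}$ (still a coloring with finitely many colors, since $\mathcal{B}_{i+1}$ is finite). Recording the witness in the color makes the choice of $B$ uniform on each refined color class, and since $c_i(B)=c_i(S)$ forces $B$ and $S$ to share their witnesses at all earlier levels as well, one gets for every $n$ a finite sequence $B_2\in\mathcal{B}_2,\ldots,B_n\in\mathcal{B}_n$ with $c$ constant on $NU(\{B_i\}_{i\le n})$. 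The monochromatic IP set is then $NU(\{B_i\})$ for an infinite branch of this finitely branching tree, obtained by Weak K\"onig's Lemma --- it is assembled from one element of each finite family $\mathcal{B}_i$, not from elements selected far down the tower. Without the coloring refinement, your invariant cannot be maintained and the construction does not close.
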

\begin{proof}
  The method is the same as Lemma \ref{fullmatchmaybe}.  Construct sequences $\mathcal{B}_2,\ldots,\mathcal{B}_n,\ldots$, $\mathcal{T}_1,\ldots,\mathcal{T}_n,\ldots$, and colorings $c_1,\ldots,c_n,\ldots$ by setting $c_1:=c$ and $\mathcal{T}_1:=\mathcal{S}$.  Given $c_i,\mathcal{T}_i$, apply Lemma \ref{fullmatch}; in the first case, we are done.  In the second, let $\mathcal{B}_{i+1},\mathcal{T}_{i+1}$ be the given witness and define $c_{i+1}$ on $NU(\mathcal{T}_{i+1})$ by setting $c_{i+1}(S):=\langle B,c_i(S)\rangle$ where $B\in\mathcal{B}_{i+1}$ is such that $c_i(S)=c_i(B)=c_i(S\cup B)$.

   Then for any $n$, we may find a sequence $\{B_i\}_{i\leq n}$ with $B_i\in\mathcal{B}_i$ and $c$ constant on $NU(\{B_i\}_{i\leq n})$.  By Weak K\"onig's Lemma, we may find an infinite sequence $\{B_i\}$ so that $c$ is constant on $NU(\{B_i\})$, as promised.
\end{proof}

\section{Difficult Examples}
In \citend{blass87}, a lower bound for the reverse mathematical strength of Hindman's Theorem is established by exhibiting a computable coloring of $\mathcal{P}_{fin}(\mathbb{N})$ which has no computable monochromatic IP set.  Specifically, two such colorings are given, one where every monochromatic IP set computes $\mathbf{0}'$ and one where no monochromatic IP set is computable in $\mathbf{0}'$.

In this section, we present computable colorings of $\mathcal{P}_{fin}(\mathbb{N})$ with various more specific properties.  We hope to serve three purposes: First, we will improve the recursion theoretic lower bound on Hindman's Theorem by giving a computable coloring of $\mathcal{P}_{fin}(\mathbb{N})$ with no $\Sigma_2$ monochromatic IP set.  Second, we will demonstrate that various stages in the proof of the previous section are optimal; if one hopes to give a proof of Hindman's Theorem within $\mathbf{ACA_0}$, this will help indicate where improvements are possible.  Finally, since these are the first new examples of colorings which are computationally difficult for Hindman's Theorem, we hope the relatively flexible nature of our method will spur the development of further progress.

We adopt a few notational conventions.  Whenever we write the union of two finite sets, say $B\cup C$, we always assume that $\max B<\min C$.  We say $\mathcal{S}$ \emph{generates} an IP set if $\mathcal{S}$ contains infinitely many pairwise disjoint elements.  (That is, $\mathcal{S}$ generates an IP set iff $NU(\mathcal{S})$ is an IP set.)  When we speak of one set $B$ containing a set $C$, we mean that $B=A_0\cup C\cup A_1$ with $\max A_0<\min C$, $\max C<\min A_1$ (and possibly $A_0,A_1$ or both empty).  Similarly, when we speak of an initial segment of $B$, we mean that $B=C\cup A_1$ with $\max C<\min A_1$.  We fix some ordering $\prec$ of $\B$ with order type $\omega$ so that if $\min B<\min C$ then $B\prec C$.

We will let $\mathcal{W}_1,\ldots,\mathcal{W}_i,\ldots$ be an enumeration of the computably enumerable subsets of $\B$, and for each $i,s$, define $\mathcal{W}_{i,s}$ to be a finite subset of $\B$  computable from $i,s$ such that $s\leq t$ implies $\mathcal{W}_{i,s}\subseteq \mathcal{W}_{i,t}$ and $\mathcal{W}_i=\bigcup_s \mathcal{W}_{i,s}$.

Before giving examples, we briefly describe our method, which is modeled on the finite injury priority argument.  (This idea was suggested to us by Carl Mummert.)  We will fix a list of conditions, indexed by the natural numbers, which we wish our coloring to satisfy; for instance, we might want to ensure that each of the countably many computably enumerable sets either fails to generate an IP set or generates one which is not monochromatic.

In this case, the $i$-th condition wishes to choose two elements of $\mathcal{W}_i$ and color them distinct colors.  However, since $\mathcal{W}_i$ is only computably enumerable, and we want our coloring to be computable, we must decide how to color a given set without being able to wait to see whether it will be in $\mathcal{W}_i$.  Instead, we will wait until some $W\in\mathcal{W}_{i,s}$ for some big enough $s$, and then color sets of the form $W\cup B$ where $\max B\geq s$.  If $\mathcal{W}_i$ generates an IP set, we are guaranteed that we can find a $B\in\mathcal{W}_i$ with $\max B\geq s$ (and $\max W<\min B$), and we will therefore have $W\cup B\in NU(\mathcal{W}_i)$ be an element colored according to our desired rule.

\subsection{A Computable Coloring with No Computably Enumerable Monochromatic IP Set}
To illustrate our method, we give a coloring with no computably enumerable monochromatic IP set.  Our method is similar to (though gives a weaker result than) Theorem 2.1 of \citend{blass87}.

\begin{theorem}
There is a computable $c:\B\rightarrow\{0,1\}$ such that if $\mathcal{S}$ is computably enumerable and generates an IP set then $NU(S)$ is not monochromatic.
\end{theorem}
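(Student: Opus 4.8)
The plan is a priority construction in the style the introduction to this section describes. For each $i$ I set up a requirement $R_i$ asserting: \emph{if $\mathcal{W}_i$ generates an IP set, then $NU(\mathcal{W}_i)$ is not monochromatic under $c$}. (Since $\mathcal{W}_1,\mathcal{W}_2,\ldots$ enumerates the computably enumerable subsets of $\B$, meeting every $R_i$ proves the theorem.) Requirement $R_i$ has priority rank $i$ and acts at most finitely often. When $R_i$ is permitted to act at a stage $s$, it searches for a set $W\in\mathcal{W}_{i,s}$ whose least element exceeds the largest element of every witness currently held by a higher-priority requirement, with $s$ also large enough that $W$ has already received a color; it then appoints $W$ as its witness, records $s_i:=s$, and \emph{reserves} every set of the form $W\cup B$ with $B$ nonempty and $\max B\geq s_i$. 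The coloring is defined by $c(X)=1$ if $X=W_i\cup B$ is a set currently reserved by some $R_i$ (i.e.\ $R_i$ holds witness $W_i$ and $\max B\geq s_i$), and $c(X)=0$ otherwise; this is computable since at any stage only finitely many witnesses have been appointed. Whenever a requirement acts it cancels the witnesses held by all lower-priority requirements, which must then re-appoint a witness with a still larger least element — this is the finite-injury device, and standard counting shows each $R_i$ is cancelled only finitely often.

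I would then verify two things. First, \textbf{consistency}: the only sets that are forced to have color $0$ are the final witnesses $W_i$ themselves (everything else either is reserved — and all reservations demand the same color $1$, so overlapping reservations never clash — or takes the default value $0$). Because of the cancellation device, when $R_i$ takes its \emph{final} action every higher-priority requirement has already taken its final action, so $\min W_i$ lies above their maxima; and any lower-priority requirement that holds a witness in the limit appointed it after $R_i$'s final action, hence above $W_i$. Thus the surviving witnesses are linearly ordered by ``lies entirely below,'' so no surviving witness contains another, and in particular no $W_i$ is among the reserved sets — so indeed $c(W_i)=0$ for every surviving $W_i$. Second, \textbf{success}: if $\mathcal{W}_i$ generates an IP set then, after the last time $R_i$ is cancelled, $\mathcal{W}_i$ still enumerates pairwise disjoint sets of arbitrarily large least element, so $R_i$ eventually appoints a permanent witness $W_i$ at some stage $s_i$; choosing $B\in\mathcal{W}_i$ pairwise disjoint from $W_i$ with $\min B>\max W_i$ and $\max B\geq s_i$ gives $W_i\cup B\in NU(\mathcal{W}_i)$ with $c(W_i\cup B)=1\neq 0=c(W_i)$, so $NU(\mathcal{W}_i)$ is not monochromatic.

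The main obstacle is the tension between making $c$ computable and keeping the requirements' demands mutually consistent: we cannot wait to learn whether a candidate $W$ actually lands in $\mathcal{W}_i$, so $R_i$ must commit to controlling an entire ``cone'' $\{W\cup B:\max B\geq s_i\}$ on the mere strength of $W$ having been enumerated, and it must do so in ignorance of the other requirements' eventual behavior. The device that resolves this is forcing each witness to have least element far above the witnesses of higher-priority requirements, which makes the surviving cones pairwise either disjoint or nested with agreeing demands; the finite-injury cancellations are exactly what guarantee this ``far above'' relation holds in the limit. The remaining work — the bookkeeping that each $R_i$ is injured only finitely often and hence stabilizes, and checking that the color assigned to a set $X$ is already determined at the stage $X$ is colored — is routine.
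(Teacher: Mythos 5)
Your overall architecture (finitely many "reservations" of the form $W\cup B$ with $\max B\geq s$, decided computably at the stage a set is reached) is the right one, but there is a genuine gap at the single point your argument actually needs: you never ensure that the appointed witness $W_i$ itself gets color $0$. Your coloring assigns $c(X)=1$ exactly when $X$ decomposes as $V\cup B$ for a witness $V$ \emph{held at the stage $X$ is colored}, and $c(W_i)$ is therefore fixed at stage $\max W_i$, under whatever configuration of reservations existed then. Your consistency argument only analyzes the \emph{limit} configuration (surviving witnesses are pairwise separated, so none contains another). But at stage $\max W_i$ the active reservations may include witnesses of lower-priority requirements, or witnesses that are later cancelled, and nothing in your appointment rule (which only bounds $\min W_i$ against \emph{currently held higher-priority} witnesses) prevents $W_i$ from lying in one of those cones. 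In that case $c(W_i)=1=c(W_i\cup B)$ and your designated pair is monochromatic. Nor can you simply demand that $R_i$ appoint a $0$-colored witness: a c.e. set $\mathcal{W}_i$ could (e.g., via the recursion theorem) enumerate only sets already reserved by other requirements, and then no suitable witness exists, so the repair is not routine.

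The paper's construction avoids this by never comparing a reserved set against a bare witness. Each requirement $e$ appoints \emph{two} witnesses $W^s_{2e},W^s_{2e+1}$, kept pairwise separated (so every set has at most one witness as an initial segment and reservations never clash), and colors any $B$ extending $W^s_i$ with the color $i\bmod 2$. The two sets exhibited in $NU(\mathcal{W}_e)$ are then $W_0\cup B$ and $W_1\cup B$, both of whose colors the construction fully controls, which is exactly the control your single-witness scheme lacks. (As a side benefit, defining the $W^s_i$ directly from the approximations $\mathcal{W}_{j,s}$ makes them stabilize automatically, so no cancellation/injury machinery is needed at all.) To salvage your write-up you should adopt this two-witness device, or otherwise supply a proof that each $\mathcal{W}_i$ generating an IP set contains a $0$-colored element above the relevant bound; as it stands that step is missing.
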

\begin{proof}
For any $s$ and each $i\leq s$, we define $W^s_i$ to be least (with respect to $\prec$) such that:
\begin{itemize}
\item $W^s_{i}\in \mathcal{W}_{\lfloor i/2\rfloor,s}$
\item If $j<i$ and $W^s_j$ is defined then $\max W^s_j< \min W^s_i$
\end{itemize}
If there is no such element then $W^s_i$ is undefined.  Note that $\mathcal{W}_{\lfloor i/2\rfloor, s}$ is a finite set, so it is computable from $i,s$ whether $W^s_i$ exists, and if so, what the value of $W^s_i$ is.

Given $B\in \B$ with $\max B=s$, note that there are finitely many $W^s_i$ with $i\leq s$.  By checking each in turn, it is computable whether there is any $i$ such that $W^s_i$ is an initial segment of $B$.  From the definition of the $W^s_i$, there is at most one such $i$.  If there is no such $i$, set $c(B)=0$.  If there is such an $i$, set $c(B)=i\mod 2$.

\begin{claim}
  For each $i$, there is some $s$ such that $W^s_i=W^t_i$ for all $t\geq s$ (where both sides are undefined if either is).
\end{claim}
\begin{claimproof}
  By strong induction on $i$.  Let $s_0$ be large enough such that for all $j<i$, if $t\geq s_0$ then $W^{s_0}_j=W^t_j$.  If $\mathcal{W}_{\lfloor i/2\rfloor}$ contains any $W$ such that $ \min W> \max W^{s_0}_j$ for all $j<i$, there is a least such $W$.  There must be some $s$ such that $W\in W_{\lfloor i/2\rfloor,s}$, and it follows that $W^t_i=W$ for all $t\geq \max{s,s_0}$.  Otherwise, there is no such $W$, so $W^t_i$ is undefined for all $t\geq s_0$.  (When $i=0$ there are no $j<i$, so we may take $s_0=0$ and $W$ to be the least element of $W_0$ if $W_0$ is non-empty.)
\end{claimproof}

Then the following follows immediately:
\begin{claim}
  If $\mathcal{W}_e$ generates an IP set then there is some $s$ such that for all $t\geq s$, $W^t_{2e},W^t_{2e+1}$ are defined.
\end{claim}

Suppose $\mathcal{W}_e$ generates an IP set.  Then in particular, it contains some $W_0,W_1$ with $\max W_0<\min W_1$ such that for some $s$, $W^t_{2e}=W_0$ and $W^t_{2e+1}=W_1$ for all $t\geq s$.  Since $\mathcal{W}_e$ contains infinitely many pairwise disjoint elements, it must contain some $B$ with $\max B\geq s$ and $\min B>\max W_1$.  It follows that $c(W_0\cup B)=0$ and $c(W_1\cup B)=1$.  Since $W_0\cup B,W_1\cup B\in FS(W_e)$, it follows that $W_e$ does not generate a monochromatic IP set.
\end{proof}

\subsection{Computable Colorings with No Computably Enumerable Sets Half-Matched by Small Sets}
Here we show that there is no bound on the size of the finite set $\mathcal{B}$ found in Lemma \ref{halfmatch}.

\begin{theorem}
  For any $k$, there is a computable $c:\B\rightarrow\{0,1\}$ such that for any set $\mathcal{A}$ with size $\leq k$ and any computably enumerable set $\mathcal{S}$ such that $\mathcal{S}$ generates an IP set, $\mathcal{A}$ does not half-match $\mathcal{S}$.
\end{theorem}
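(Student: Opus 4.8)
The plan is a finite-injury priority construction, in the spirit of the two preceding colorings. Since there are only countably many finite $\mathcal{A}\subseteq\B$, list computably all pairs $\langle\mathcal{A},i\rangle$ with $|\mathcal{A}|\leq k$, and for each impose the requirement that if $\mathcal{W}_i$ generates an IP set then $\mathcal{A}$ does not full-match $NU(\mathcal{W}_i)$. (As with the examples of Subsection 2.1, only computably enumerable $\mathcal{S}$ can be defeated: applying Hindman's Theorem to $c$ restricted to $NU(\mathcal{S})$ yields a monochromatic IP set $\mathcal{S}'\subseteq NU(\mathcal{S})$, and then $\{A\}$ full-matches $\{S\in\mathcal{S}':\min S>\max A\}$ for any $A\in\mathcal{S}'$; so the statement to be proved is the one in which $\mathcal{S}$ ranges over computably enumerable sets, which these requirements together capture.)

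The strategy for $R_e$, $e=\langle\mathcal{A},i\rangle$ with $\mathcal{A}=\{A_1,\dots,A_m\}$, follows the method sketched just before Subsection 2.1. We wait until some $V\in\mathcal{W}_{i,s}$ appears (with $s$ past $\max\bigcup_jA_j$) for which $\min V$ exceeds $\max\bigcup_jA_j$ and every marker and restraint held by a higher-priority requirement; we \emph{designate} it as the marker $V_e$ and fix a ``side'' $b_e\in\{0,1\}$, and thereafter $R_e$ requests $c(V_e\cup B)=b_e$ and $c(A_j\cup V_e\cup B)=1-b_e$ for every $j$ and every $B$ with $\min B>\max V_e$ and $\max B\geq s$. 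Since $\mathcal{W}_i$ generates an IP set it contains such a $B$, so $T:=V_e\cup B\in NU(\mathcal{W}_i)$; when the requests are honored, $c(A_j\cup T)=1-b_e=1-c(T)\neq c(T)$ for every $j$, so $\mathcal{A}$ does not full-match $T$ and $R_e$ holds. To keep $c$ computable, define $c(X)$ at stage $s=\max X$ by recomputing the markers of the requirements of index $\leq s$ (taken $\prec$-least in the appropriate $\mathcal{W}_{j,s}$ above the earlier ones, exactly as the $W^s_i$ in the previous proofs), finding the highest-priority requirement for which $X=A\cup V\cup B$ with $V$ its current marker, $A\in\{\emptyset\}\cup\{A_1,\dots,A_m\}$, $\min B>\max V$ and $\max B=s$, and assigning $c(X)$ that requirement's requested value; if no marker matches, set $c(X):=0$. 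The spacing imposed on $V_e$ guarantees that $V_e\cup B$ is never matched by a higher-priority requirement, so $c(V_e\cup B)=b_e$ always holds, and, as in the earlier Claim, the markers and restraints stabilize, so each requirement is injured only finitely often.

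The step I expect to be the main obstacle is showing that each $R_e$ is \emph{eventually} satisfied — that once its higher-priority markers have stabilized it can designate $V_e$ and choose $b_e$ so that no full-match is created at $T=V_e\cup B$. The difficulty is that a fixed $A_j$ may itself begin with a higher-priority template, i.e.\ $A_j\cap[0,\max V_{e'}]=A'\cup V_{e'}$ for a higher-priority marker $V_{e'}$ and a template $A'$ of $R_{e'}$; then $A_j\cup V_e\cup B$ is also matched by $R_{e'}$, and since this overlap lies entirely below $\min V_e$ it cannot be avoided by the choice of $V_e$, so the colour of $A_j\cup T$ is the value $v_j$ forced by $R_{e'}$ rather than $1-b_e$. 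For the $j$'s whose sets remain free any $b_e$ works, and for a pre-empted $j$ with $v_j\neq c(A_j)$ any $b_e$ also works since then $A_j$ can never full-match $T$; so only pre-empted $j$'s with $v_j=c(A_j)$ constrain the choice, each demanding $b_e\neq c(A_j)$. The crux is therefore to rule out pre-empted $j$'s with $v_j=c(A_j)=0$ and with $v_j=c(A_j)=1$ occurring simultaneously, by tracking exactly which higher-priority templates a fixed set can begin with together with the handful of already-determined values ($c(A_j)$, $c(A')$ and the relevant sides $b_{e'}$) that pin down each $v_j$. Getting this bookkeeping to go through — likely after constraining how sides are chosen or equipping $R_e$ with a more robust family of potential witnesses, rather than the naive single attempt above — is where the real work lies; granted it, an induction along the priority ordering completes the proof, and $k$ enters only through how widely consecutive markers must be spaced, since each requirement reserves $m+1\leq k+1$ families of sets.
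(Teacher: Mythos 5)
There is a genuine gap, and you have located it yourself: your construction with a single marker $V_e$ and a single ``side'' $b_e$ per requirement does not survive the interference you describe, and the paragraph that is supposed to resolve it only names candidate fixes (``constraining how sides are chosen or equipping $R_e$ with a more robust family of potential witnesses'') without carrying either out. The scenario really is fatal for the naive strategy: if $A_1$ and $A_2$ are pre-empted by two different higher-priority requirements forcing $c(A_1\cup T)=c(A_1)=0$ and $c(A_2\cup T)=c(A_2)=1$, then either choice of $b_e=c(T)$ produces a full match, and since the pre-empting markers sit entirely below $\min V_e$, no choice of $V_e$ or of spacing avoids this. An induction along the priority ordering cannot start until this is settled, so the proof as written does not go through.

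The missing idea, which is exactly the second of your two guesses, is to give each requirement $k+1$ markers $W^0_{i,s},\ldots,W^k_{i,s}$ rather than one, and to win by counting. The paper defines a decomposition $B=Z\cup W^u_{i,s}\cup D$ to be \emph{blocked} when a longer correct decomposition by another requirement overrides it, colors by the rule $c(Z\cup W^u_{i,s}\cup D)=1-c(W^u_{i,s}\cup D)$ for the unique correct unblocked decomposition, and then proves two things: first, for fixed $n$ and $B$, all blocked decompositions $A\cup W^v_n\cup B$ (over all $A\in\mathcal{A}_n$ and all $v$) are blocked by the \emph{same} $i'$; second, blocking by $i'$ requires some $Z\in\mathcal{A}_{i'}$ to contain $W^v_n$ and no other $W^{v'}_n$, so at most $|\mathcal{A}_{i'}|=k$ of the $k+1$ values of $v$ can be blocked. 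Hence some $v$ gives a correct unblocked decomposition for every $A\in\mathcal{A}_n$ simultaneously, and the coloring rule then yields $c(A\cup W^v_n\cup B)\neq c(W^v_n\cup B)$ outright --- defeating even half-matching, so no coordination of a side $b_e$ between $c(T)$ and $c(A_j\cup T)$ is needed. This is where the hypothesis $|\mathcal{A}|\leq k$ genuinely enters, not (as in your closing remark) merely through the spacing of markers. One further small point: your reduction of arbitrary $\mathcal{S}$ to computably enumerable $\mathcal{S}$ via Hindman's Theorem is a reasonable reading of the loosely stated theorem, but the paper simply proves the statement for c.e.\ $\mathcal{S}$ (as its subsection title indicates), so nothing in your first paragraph is needed.
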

\begin{proof}
Fix a computable sequence $\{\mathcal{A}_i,j_i\}$ where each $\mathcal{A}_i$ is a set of size $\leq k$, and such that whenever $\mathcal{A}$ is a set of size $\leq k$ and $j$ is an integer, there is an $i$ with $\mathcal{A}_i=\mathcal{A}$ and $j_i=j$.  The purpose of $j_i$ is to represent the computably enumerable set $\mathcal{W}_{j_i}$ from the enumeration fixed above.  In particular, if $\mathcal{A}$ is a set of size $\leq k$ and $\mathcal{W}$ is computably enumerable, there is an $i$ with $\mathcal{A}_i=\mathcal{A}$ and $\mathcal{W}_{j_i}=\mathcal{W}$.

For each $s$ and each $i\leq s$ and $u\in[0, k]$, we inductively define $W_{i,s}^u$ to be least satisfying the following properties:
  \begin{itemize}
  \item $i<\min W^u_{i,s}$
  \item $\max Z<\min W^u_{i,s}$ for all $Z\in\mathcal{A}_i$
  \item If $j<i$ and $W^{u'}_{j,s}$ is defined then $\max W^{u'}_{j,s}<\min W^u_{i,s}$
  \item If $u'<u$ and $W^{u'}_{i,s}$ is defined then $\max W^{u'}_{i,s}<\min W^u_{i,s}$
  \item $W^u_{i,s}\in \mathcal{W}_{j_i,s}$
  \end{itemize}
If there is no such $W^u_{i,s}$ then $W^u_{i,s}$ is undefined.  Note that $W_{i,s}^u$ is computable from $i,s,u$, since $\mathcal{W}_{j_i,s}$ is computable from $i,s$ (and in particular, the set of $i,s,u$ such that $W^u_{i,s}$ is defined is computable).

  A \emph{decomposition} of $B$ with $\max B=s$ is a tuple $i,u,Z,D$ such that $B=Z\cup W^u_{i,s}\cup D$ and neither $Z$ nor $D$ contains $W^{u'}_{i,s}$ for any $u'\neq u$.  
We often write that $Z\cup W^u_{i,s}\cup D$ is a decomposition of $B$ to mean that the tuple $i,u,Z,D$ is.  A decomposition is \emph{correct} if $Z\in\mathcal{A}_i$.  (Recall that when we write $Z\cup W^u_{i,s}\cup D$, we implicitly assume that $\max Z<\min W^u_{i,s}$ and $\max W^u_{i,s}<\min D$.)  Note that correctness of a decomposition is computable, since $\mathcal{A}_i$ is finite and computable from $i$, and $W^u_{i,s}$ is computable from $i,u,s$.

Observe that, for each $n$, there is a stage $s_n$ by which $W^u_{n,s_n}$ has stabilized for each $u\leq k$, in the sense that for all $t\geq s_n$, $W^u_{n,t}=W^u_{n,s_n}$ (where if one side is undefined then the other is as well).  When $W^u_{n,s_n}$ is defined, we call it $W^u_n$.
\begin{claim}
Let $c$ be a coloring, and suppose that for all $n$ and all $D$ with $\min D\geq s_n$, there is a $u\leq k$ such that either $W^u_n$ is undefined, or for all $Z\in\mathcal{A}_n$, $c(Z\cup W^u_n\cup D)\neq c(W^u_n\cup D)$.  Then $c$ satisfies the theorem.
\label{lastclaim}
\end{claim}
\begin{claimproof}
Let $\mathcal{A}$ be given with $|\mathcal{A}|= k$ and let $\mathcal{S}$ be computably enumerable and generate an IP set.  Choose $n$ such that $\mathcal{A}_n=\mathcal{A}$ and $\mathcal{W}_{j_n}=\mathcal{S}$.  Since $\mathcal{S}$ generates an IP set, $W^u_n$ is defined for all $u\leq k$, and we may find a $D\in\mathcal{S}$ with $\min D\geq s_n$.  Then for some $u$, $c(Z\cup W^u_n\cup D)\neq c(W^u_n\cup D)$ for all $Z\in\mathcal{A}$.  Therefore $\mathcal{A}$ does not half-match $W^u_n\cup D$, and since $W^u_n\cup D\in NU(\mathcal{S})$, it follows that $\mathcal{A}$ does not half-match $\mathcal{S}$. 
\end{claimproof}

We will construct $c$ so that it satisfies the preceeding claim.  A na\"ive attempt would be to simply decree that $c(Z\cup W^u_n\cup D)\neq c(W^u_n\cup D)$ for \emph{all} correct decompositions $Z\cup W^u_n\cup D$.  It's not hard to see, however, that this is too general.  If $B=Z\cup W^u_n\cup D=Z'\cup W^{u'}_{n'}\cup D'$ and both decompositions are correct then it might be that $c(W^u_n\cup D)\neq c(W^{u'}_{n'}\cup D')$, in which case we cannot color $B$ so that $c(B)\neq c(W^u_n\cup D)$ and also $c(B)\neq c(W^{u'}_{n'}\cup D')$.  Let us say, temporarily, that $Z,u$ \emph{conflicts} with $n'$ (over $n,D$) if there are $Z',u'$ so that $Z'\cup W^{u'}_{n'}\cup D'$ is a correct decomposition of $Z\cup W^u_n\cup D$; note that $u'$ is uniquely fixed by $D$.

When we have conflicting decompositions, we must have $n\neq n'$, by the definition of a decomposition.  If $n<n'$ then we must have $D'$ a proper final subset of $D$.  We illustrate this situation in Figure 1.  Note that this conflict only occurs when $Z'$ contains $W^u_n$ for exactly one $u$.  In particular, if we pick a fixed $D$ and $n<n'$, there are at most $|\mathcal{A}_{n'}|$ possible pairs $Z, u$ with $Z\in\mathcal{A}_n$ such that $Z,u$ conflicts with $n'$.  Since there are $k+1>k\geq |\mathcal{A}_{n'}|$ possible choices for $u$, this means there is some $u$ such for every $Z\in\mathcal{A}_n$, $Z,u$ does not conflict with $n'$ over $n,D$.

\begin{figure}[h]\centering  
  \begin{xy}
<25pt,-15pt>*\hbox{$Z$}; <65pt,-15pt>*\hbox{$W^u_{n,s}$};
    <190pt,-15pt>*\hbox{$D$}; <50pt,-45pt>*\hbox{$Z'$};
    <75pt,-45pt>*\hbox{$Z_0$}; <140pt,-45pt>*\hbox{$W^{u'}_{n',s}$};
    <240pt,-45pt>*\hbox{$D'$}; 
    <25pt,-45pt>*\hbox{$Z$};
\POS <0pt,0pt>\ar@{-}<300pt,0pt>
    \ar@{-}<0pt,-60pt> \POS <300pt,0pt>\ar@{-}<300pt,-60pt> \POS
    <0pt,-30pt> \ar@{-}<300pt,-30pt> \POS
    <50pt,0pt>\ar@{-}<50pt,-30pt> \POS <80pt,0pt>\ar@{-}<80pt,-30pt>
    \POS<0pt,-60pt>\ar@{-}<300pt,-60pt>
    \POS<100pt,-30pt>\ar@{-}<100pt,-60pt>
    \POS<180pt,-30pt>\ar@{-}<180pt,-60pt>
    \POS<50pt,-30pt>\ar@{--}<50pt,-60pt>
  \end{xy}
  \caption{Two decompositions of the same set $B$}
\end{figure}

There is a remaining obstacle, namely that, for various values of $Z$ and $u$, the pair $Z,u$ could conflict with multiple values of $n'$.  Our solution is to use a stronger notion, \emph{blocking}, and arrange (see Claim \ref{blockingclaim}) that we need only worry about the largest $n'$ which is a source of conflicts.

We now make this precise.  Consider triples $i,u,D$ (viewed as referring to the set $W^u_{i,\max D}\cup D$); we define the \emph{blocked} triples $i,u,D$ by induction on the length of $D$.  The triple $i,u,D$ is \emph{blocked} by $i'$ if there exist $Z',i',u',D'$ such that:
\begin{itemize}
  \item $\max D=\max D'$,
  \item $W^{u'}_{i',\max D}\cup D'$ is a final segment of $D$,
  \item $W^u_{i,\max D}\cup D$ is a final segment of $Z\cup W^{u'}_{i',\max D}\cup D'$,
  \item The triple $i',u',D'$ is unblocked,
  \item $Z'\cup W^{u'}_{i',\max D}\cup D'$ is a correct decomposition, and
  \item If $Z'$ contains $W^{u^*}_{i,\max D}$ then $u^*=u$.
\end{itemize}
  Note that when this occurs, $i<i'$.  When $B=Z\cup W^u_{i,s}\cup D$ is a correct decomposition, we say it is blocked by $i'$ iff $i,u,D$ is blocked by $i'$.

  \begin{claim}
    For any $B$, there is at most one correct unblocked decomposition.
  \end{claim}
  \begin{claimproof}
  \label{intervene}
    Suppose $B=Z\cup W^u_{i,s}\cup D=Z'\cup W^{u'}_{i',s}\cup D'$ give two correct decompositions, with $Z$ a proper intial segment of $Z'$.  If $W^{u'}_{i',s}\cup D'$ is not blocked then by definition, $i,u,D$ is blocked by $i'$.
  \end{claimproof}

Since correctness is computable, we may identify the unblocked decompositions of $B$ by examining all possible decompositions of all sets $B'$ with $\max B'=\max B$.  There are finitely many such sets $B'$, and therefore finitely many such decompositions.  In particular, given $B$, we may computably determine whether there is a correct unblocked decomposition, and if so, what it is.
   
  We now define our coloring inductively.  Let $B$ be given, and suppose $c(B)$ has been decided for all proper final segments of $B$.  Let $Z\cup W^u_{i,s}\cup D$ be the correct, unblocked decomposition, if there is one.  Then set $c(Z\cup W^u_{i,s}\cup D)=1-c(W^u_{i,s}\cup D)$.  If there is no correct unblocked decomposition, set $c(B)=0$.


\begin{claim}
  Suppose $n,v,B$ is blocked by $i$ while $n,v',B$ is blocked by $i'$.  Then $i=i'$.
  \label{blockingclaim}
\end{claim}
\begin{claimproof}
  Suppose $i\neq i'$; without loss of generality, we may assume $i<i'$.  Let $s=\max B$.  There exist $Z,u,D$ and $Z',u',D'$ witnessing the blocking.  We will show that $Z',i',u',D'$ witnesses the blocking of $i,u,D$.  
  
  We certainly have $\max D=s=\max D'$.  Since $W^u_{i,s}\cup D$ and $W^{u'}_{i',s}\cup D'$ are both proper final segments of $B$ with $\max W^u_{i,s}<\min W^{u'}_{i',s}$, it follows that $W^{u'}_{i',s}\cup D'$ is a proper final segment of $D$.  Since $W^u_{i,s}\cup D$ is a proper final segment of $W^v_{n,s}\cup B$, which is in turn a proper final segment of $Z'\cup W^{u'}_{i',s}\cup D'$, we have that $W^u_{i,s}\cup D$ is a proper final segment of $Z'\cup W^{u'}_{i',s}\cup D'$.  Since $Z',i',u',D'$ blocks $n,v,B$, it must be that $i',u',D'$ is unblocked.  By assumption, $Z'\cup W^{u'}_{i',s}\cup D'$ is a correct decomposition.
  
  Finally, suppose $Z'$ contains $W^{u^*}_{i,s}$ for some $u^*$; since $W^v_n\cup B$ is a proper final segment of $Z'\cup W^{u'}_{i',s}\cup D'$ and $\max W^v_n<\min W^{u^*}_{i,s}$, it must be that $W^{u^*}_{i,s}$ is contained in $B$.  Since $W^v_n\cup B$ is a proper final segment of $Z\cup W^u_{i,s}\cup D$, it must be that $W^{u^*}_{i,s}$ is contained in $Z\cup W^u_{i,s}\cup D$.  Since $Z\cup W^u_{i,s}\cup D$ is a decomposition, $u^*=u$.
  
  These conditions show that $i,u,D$ is blocked, contradicting the assumption.  So we must have $i= i'$.
  \end{claimproof}
  
  So, holding $B$ fixed, there is at most one $i$ such that there exist $A,v$ so that $A\cup W^v_{n,\max B}\cup B$ is blocked by $i$.  In order for $A\cup W^v_{n,\max B}\cup B$ to be blocked by $i$, there must be a $Z\in\mathcal{A}_i$ such that $W^v_n$ is contained in $Z$, and for $v\neq v'$, $W^{v'}_n$ is not contained in $Z$.  Since $|\mathcal{A}_i|=k$, there are at most $k$ values of $v$ for which any $A\cup W^v_{n,\max B}\cup B$ is blocked.  Therefore for some $v\leq k$, $A\cup W^v_{n,\max B}\cup B$ is a correct unblocked decomposition for all $A\in\mathcal{A}_n$, and therefore $c(A\cup W^v_{n,\max B}\cup B)= 1-c(W^v_n\cup B)$.  We may now apply Claim \ref{lastclaim}.
\end{proof}

\subsection{A Computable Coloring with No Computably Enumerable Full-Matched Sets}

Here we show that the first clause in Lemma \ref{fullmatchmaybe} is necessary by presenting a computable coloring in which there is no finite set $\mathcal{B}$ and computable, or even computably enumerable, IP set $\mathcal{T}$ such that $\mathcal{B}$ full-matches $\mathcal{T}$.

\begin{theorem}
  There is a computable $c:\B\rightarrow\{0,1\}$ such that for any finite set $\mathcal{B}$ and any computably enumerable set $\mathcal{S}$ such that $\mathcal{S}$ generates an IP set, $\mathcal{B}$ does not full-match $\mathcal{S}$.
\end{theorem}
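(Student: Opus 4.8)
The plan is to run another finite‑injury‑style priority construction, in the same spirit as the two preceding theorems. Fix an enumeration $\{(\mathcal{A}_i,\mathcal{W}_i)\}_{i\in\mathbb{N}}$ in which every pair consisting of a finite $\mathcal{A}\subseteq\B$ and a computably enumerable $\mathcal{W}\subseteq\B$ appears. The $i$‑th requirement will ensure that if $\mathcal{W}_i$ generates an IP set then some single $S\in NU(\mathcal{W}_i)$ is not full‑matched by any element of $\mathcal{A}_i$; since $NU(\mathcal{W}_i)$ is an IP set, defeating full‑matching at one such $S$ defeats it for the whole set. As before we introduce, for each $s$, each $i\le s$, and each block index $u$ (now ranging over all of $[0,s]$, since the $\mathcal{A}_i$ obey no common size bound), a block $W^u_{i,s}$ chosen $\prec$‑least in $\mathcal{W}_{i,s}$ subject to lying to the right of all earlier blocks (of earlier requirements and of smaller $u$) and subject to $\min W^u_{i,s}>\max\bigcup\mathcal{A}_i$; if $\mathcal{W}_i$ generates an IP set then each $W^u_i$ stabilizes. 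We reuse the notions of a \emph{decomposition} $B=Z\cup W^u_{i,s}\cup D$ (with $W^u_{i,s}$ the unique requirement‑$i$ block occurring in $B$), of a \emph{correct} decomposition ($Z\in\mathcal{A}_i$), and of a \emph{blocked} decomposition (the same recursion on $|D|$, so that a correct decomposition is overruled exactly when a reconfiguration of its left part yields a correct decomposition whose block lies further to the right), together with the fact that $B$ has at most one correct unblocked decomposition.

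The one change that matters is the coloring rule, and it is forced on us. Unlike in the bounded‑size setting, we cannot defeat half‑matching: Lemma~\ref{halfmatch} is proved in $\mathbf{RCA_0}$, so applied to $\mathcal{S}=\B$ and our computable $c$ it produces a \emph{computable} — hence computably enumerable — IP set that is half‑matched by a finite set, so ``no finite set half‑matches any computably enumerable IP set'' is simply false. What we can arrange is failure of full‑matching, and for that we color by recursion on proper \emph{initial} segments: if $B$ has a (unique) correct unblocked decomposition $Z\cup W^u_{i,s}\cup D$, set $c(B)=1-c(Z)$; otherwise set $c(B)=0$. This is strictly weaker than demanding $c(B)\ne c(W^u_{i,s}\cup D)$: if a set $A\cup W^u_n\cup Y$ with $A\in\mathcal{A}_n$ receives color $1-c(A)$, then $A$ cannot full‑match $W^u_n\cup Y$, since that would require $c(A)=c(W^u_n\cup Y)=c(A\cup W^u_n\cup Y)=1-c(A)$; yet nothing is claimed about $c(A\cup W^u_n\cup Y)$ versus $c(W^u_n\cup Y)$, so Lemma~\ref{halfmatch} is not contradicted. (In particular this coloring already subsumes the first theorem of this section, since a monochromatic computably enumerable IP set would be full‑matched by any one of its elements.)

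For the verification, suppose $\mathcal{W}_n$ generates an IP set and let $s^*$ be a stage past which $W^0_n$ has stabilized. The key step is to produce a target $Y\in NU(\mathcal{W}_n)$ with $\min Y>\max W^0_n$ which contains no block $W^{u'}_{i',\,\max Y}$ that could seed a competing correct decomposition. Granting such a $Y$, for every $A\in\mathcal{A}_n$ the tuple with left part $A$, block $W^0_n$, and right part $Y$ is a genuine decomposition of $A\cup W^0_n\cup Y$ (since $\max\bigcup\mathcal{A}_n<\min W^0_n$, $A$ carries no requirement‑$n$ block, and by choice of $Y$ neither does $Y$); it is correct because $A\in\mathcal{A}_n$; it is unblocked, since any blocker would require a block of some requirement sitting inside $Y$; and any competing correct decomposition, which could only use a block lying inside $A$, is itself blocked by this one, as one recovers $A\in\mathcal{A}_n$ by reinserting that block into the left part. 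Hence $c(A\cup W^0_n\cup Y)=1-c(A)$ for all $A\in\mathcal{A}_n$, and since $W^0_n\cup Y\in NU(\mathcal{W}_n)$, no $A\in\mathcal{A}_n$ full‑matches $W^0_n\cup Y$, so $\mathcal{A}_n$ does not full‑match $NU(\mathcal{W}_n)$.

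The main obstacle is precisely the existence of a suitable target $Y$: because the sets $\mathcal{A}_{i'}$ carry no common size bound, a single misplaced block inside $Y$ could in principle let a very large (or very high‑priority) requirement overrule our decomposition, and there is no uniform bound on how many blocks one must avoid. The construction therefore has to be set up so that the blocks are spaced widely enough, and the target chosen carefully enough among the infinitely many pairwise‑disjoint elements supplied by $\mathcal{W}_n$, that only finitely many candidate targets can be spoiled; reconciling this choice with the recursion defining ``blocked'' and with the stabilization bookkeeping is the technical heart of the argument, while the remainder is a routine adaptation of the previous proof.
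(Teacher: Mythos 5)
Your diagnosis of why the coloring rule must change is exactly right: defeating half-matching for arbitrary finite sets is impossible by Lemma~\ref{halfmatch}, so one must flip against $c(Z)$ (the left part) rather than against $c(W\cup D)$, and this only defeats full-matching. The paper's coloring does the same thing in spirit. But your verification has a genuine gap, and it is the one you yourself flag: the existence of a ``clean'' target $Y\in NU(\mathcal{W}_n)$ containing no block of any other requirement. This cannot be obtained by spacing and careful choice. The set of blocks $W^{u'}_{i',s}$ you must avoid depends on $s=\max Y$, and as you pass to targets with larger $\max$ (which you must, since only finitely many elements of $\mathcal{W}_n$ have $\max$ below any fixed bound), ever more requirements $i'\le\max Y$ come into play and define new blocks; whether those blocks sit inside $Y$ as consecutive sub-blocks is not under your control, and since the competing $\mathcal{A}_{i'}$ range over \emph{all} finite sets with no size bound, you cannot close the argument by the counting that worked in the bounded-$k$ theorem (``at most $k$ values of $v$ are spoiled''). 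The circularity between choosing $Y$ and knowing which blocks to avoid is not bookkeeping to be reconciled later; it is the obstruction.

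The paper resolves it by \emph{not} avoiding interference. It uses only two blocks $W^0_i,W^1_i$ per requirement, replaces the correct/blocked machinery by a canonical \emph{primary decomposition} (recursively, the decomposition whose right part $D$ admits no further primary decomposition, which is automatically unique), and colors by $c(Z\cup W^u_{i,s}\cup D)=c(Z)$ if $u=0$ and $\neq c(Z)$ if $u=1$. It then defines a \emph{polarity} with which $B$ ``contains'' $i$, tracking how the chain of primary decompositions inside $A\cup W^u_i\cup B$ propagates the color of $A$; the key claim is that the entire effect of whatever foreign blocks $B$ happens to contain collapses to a single parity bit $v_B$, uniform over all $A\in\mathcal{B}$, so that $c(A\cup W^{v_B}_i\cup B)=c(A)$ and $c(A\cup W^{1-v_B}_i\cup B)\neq c(A)$. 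One then simply uses the block of the losing polarity: $W^{1-v_B}_i\cup B$ is an element of $NU(\mathcal{W}_i)$ that no $A\in\mathcal{B}$ full-matches. If you want to salvage your route, the idea you are missing is this polarity-tracking: accept the foreign blocks, prove their net contribution is a computable parity, and hedge with two blocks of opposite polarity rather than hunting for an interference-free target.
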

\begin{proof}
For each $s$ and each $i\leq s$ and $u\in\{0,1\}$, we inductively define $W_{i,s}^u$ to be least satisfying the following properties:
  \begin{itemize}
  \item $i<\min W^u_{i,s}$
  \item If $j<i$ and $W^{u'}_{j,s}$ is defined then $\max W^{u'}_{j,s}<\min W^u_{i,s}$
  \item If $W^0_{i,s}$ is defined then $\max W^0_{i,s}<\min W^1_{i,s}$
  \item $W^u_{i,s}\in \mathcal{W}_{i,s}$
  \end{itemize}
If there is no such $W^u_{i,s}$ then $W^u_{i,s}$ is undefined.  Since $\mathcal{W}_{i,s}$ is a finite set computable from $i,s$, $W^u_{i,s}$ is computable from $u,i,s$.

A \emph{primary $s$-decomposition} of $B$, where $s=\max B$, is a tuple $i,u,Z,D$ such that $B=Z\cup W^u_{i,s}\cup D$, neither $Z$ nor $D$ contains $W^{1-u}_{i,s}$ as a subsequence, and there is no primary $s$-decomposition of $D$.  Clearly there is at most one primary $s$-decomposition of $B$.  Note that since there are only finitely many decompositions of $B$, we need search only finitely many possibilities to identify whether there is a primary $s$-decomposition of $B$, and if so, what it is.

We say $B$ \emph{contains} $i$ with polarity $v$ if there is a primary $\max B$-decomposition $j,u,Z,D$ of $B$ with either $i=j$ and $v=u$, or $i$ contained in $Z$ with polarity $|v-u|$.  Observe that whenever $B$ contains $i$, $B=Z\cup W^u_{i,t}\cup D$ for some $t\leq \max B$.

We now define our coloring inductively.  Let $B$ be given, and suppose we have already decided $c(B')$ whenever $B'$ is a proper initial segment of $B$.  If $B$ has a primary $s$-decomposition $B=Z\cup W^u_{i,s}\cup D$, we set $c(B)=c(Z)$ if $u=0$ and $c(B)\neq c(Z)$ if $u=1$.  If there is no primary $s$-decomposition of $B$, we set $c(B)=0$.

\begin{claim}
  For each $i$, there is some $s$ such that $W^s_i=W^t_i$ for all $t\geq s$ (where both sides are undefined if either is).
\end{claim}

Let $\mathcal{B}$ be a finite set such that for all $A\in \mathcal{B}$, $\max A\leq i$ and let $s,W^0_i, W^1_i$ be such that for all $t\geq s$, $W^u_{i,s}=W^u_i$.  It is easy to see that for any $B$ with $\min B\geq s$ there is a $v_B$ such that, $A\cup W^u_i\cup B$ contains $i$ with polarity $|v_B-u|$ for all $A\in \mathcal{B}$.  

\begin{claim}
  For all $B$ with $\min B\geq s$, $c(A\cup W^{v_B}_i\cup B)=c(A)$ and $c(A\cup W^{1-v_B}_i\cup B)\neq c(A)$.
\end{claim}
\begin{claimproof}
  By induction on the length of $B$.  Let $D=A\cup
  W^u_i\cup B$.  $A\cup W^u_i\cup B$ gives a primary $\max
  B$-decomposition of $D$ unless $B$ has a primary $\max
  B$-decomposition, so $D$ must have a primary $\max B$-decomposition
  $Z\cup W^{u'}_j \cup B'$.  If we just have $j=i$, the claim follows
  immediately from the definition of the coloring.

  Otherwise, if $u'=0$ then $c(D)=c(Z)$ and $Z$ contains $i$ with
  polarity $|v_B-u|$; by IH applied to $Z\setminus A\cup W^u_i$,
  $c(D)=c(Z)=c(A)$ if $u=v_B$ and $c(D)=c(Z)\neq c(A)$ if $u\neq v_B$.  If
  $u'=1$ then $c(D)\neq c(Z)$ and $Z$ contains $i$ with polarity
  $1-|v_B-u|$; by IH applied to $Z\setminus A\cup W^u_i$, $c(D)\neq
  c(Z)\neq c(A)$ if $u=v_B$, so $c(D)=c(A)$, and $c(D)\neq c(Z)=c(A)$ if
  $u\neq v_B$.
\end{claimproof}

So suppose $\mathcal{A}$ full-matched $NU(\mathcal{W})$ with $\mathcal{W}$ computably enumerable.  Then for some $i$ such that $\max A\leq i$ for all $A\in\mathcal{A}$, we have $\mathcal{W}=\mathcal{W}_i$.  If $\mathcal{W}_i$ generated an IP set, there would be a $B\in \mathcal{W}_i$ with $\min B\geq s$, and $W^0_i,W^1_i\in\mathcal{W}_i$ such that either $A$ failed to full-match $W^0_i\cup B$ or $A$ failed to full-match $W^1_i\cup B$.  In either case, since both $W^0_i\cup B$ and $W^1_i\cup B$ belong to $NU(\mathcal{W}_i)$, $\mathcal{A}$ fails to full-match $NU(\mathcal{W}_i)$.
\end{proof}

\subsection{A Computable Coloring with No $\Sigma_2$ Monochromatic IP Set}
\begin{theorem}
  There is a computable $c:\B\rightarrow\{0,1\}$ such that if $\mathcal{S}$ is a $\Sigma_2$ set generating an IP set then $NU(\mathcal{S})$ is not monochromatic.
\label{hardsigma2}
\end{theorem}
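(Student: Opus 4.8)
The plan is to run a finite-injury priority construction generalizing the computable coloring with no computably enumerable monochromatic IP set constructed above. As there, we place nested ``markers'' $W^s_i$, devote the two slots $2e$ and $2e+1$ to the $e$-th $\Sigma_2$ set, and color a set $B$ with $\max B=s$ by $i\bmod 2$ when $W^s_i$ is an initial segment of $B$, and by $0$ when no marker is an initial segment of $B$. The one reason the earlier argument does not transfer verbatim is that a $\Sigma_2$ set has no monotone computable approximation: ``$W$ currently appears to be a member'' can toggle between true and false infinitely often, so the naive rule ``let $W^s_i$ be the $\prec$-least current candidate above the previous markers'' need not converge. The entire content of the proof lies in repairing the marker rule so that convergence is restored.

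To recover monotonicity I would pass to the co-c.e.\ presentation of $\Sigma_2$ sets. Fix an enumeration $\{C^e\}$ of the c.e.\ subsets of $\B\times\mathbb{N}$ and put $\mathcal{S}^e:=\{W\mid\exists n\ \langle W,n\rangle\notin C^e\}$, so that every $\Sigma_2$ subset of $\B$ equals some $\mathcal{S}^e$. Call a token $\langle W,n\rangle$ \emph{alive at stage $s$} if $\langle W,n\rangle\notin C^e_s$; aliveness is monotone (once a token dies it stays dead), and $W\in\mathcal{S}^e$ iff some token $\langle W,n\rangle$ is alive at every stage. Fixing an effective ordering of tokens of order type $\omega$, define $W^s_i$, for $i=2e$ or $i=2e+1$, to be the first coordinate of the least token $\langle W,n\rangle$ that is alive at stage $s$ and satisfies $\max W^s_j<\min W$ for all $j<i$; the coloring is then defined from the $W^s_i$ exactly as in the computably enumerable case.

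The crux is the stabilization claim: for each $i$, if $\mathcal{S}^{\lfloor i/2\rfloor}$ generates an IP set then $W^s_i$ is eventually constant, with limit a member of $\mathcal{S}^{\lfloor i/2\rfloor}$. This is proved by induction on $i$, paralleling the corresponding claim above, but using the monotonicity of aliveness in place of the monotonicity of $\mathcal{W}_{i,s}$. Once every $W^s_j$ with $j<i$ has settled, say by stage $s_0$, the nesting condition defining eligibility for slot $i$ is fixed, so the set of eligible tokens shrinks monotonically in $s$ and the least eligible token is non-decreasing for $s\geq s_0$; and since $\mathcal{S}^{\lfloor i/2\rfloor}$ has infinitely many pairwise disjoint members it contains some $V$ with $\min V$ larger than every settled $\max W^s_j$, hence a token $\langle V,n_0\rangle$ alive at all stages, which bounds the non-decreasing sequence from above and forces it to converge; the limiting token is alive at every stage, so the limiting marker lies in $\mathcal{S}^{\lfloor i/2\rfloor}$. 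I expect this monotonicity-and-boundedness bookkeeping, together with re-checking the uniqueness property used to define the coloring (at most one marker is an initial segment of a given set, which follows from nesting since distinct nested markers have distinct minima), to be the main, if routine, work.

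With the stabilization claim in hand the diagonalization is as before. Given a $\Sigma_2$ set $\mathcal{S}$ generating an IP set, choose $e$ with $\mathcal{S}^e=\mathcal{S}$; then slots $2e$ and $2e+1$ settle, by some stage $s_0$, to markers $W_{2e},W_{2e+1}\in\mathcal{S}$ with $\max W_{2e}<\min W_{2e+1}$. Pick $B\in\mathcal{S}$ with $\min B>\max W_{2e+1}$ and $\max B\geq s_0$. Then $W_{2e}\cup B$ and $W_{2e+1}\cup B$ both lie in $NU(\mathcal{S})$, and the coloring rule assigns them colors $0$ and $1$ respectively, since the unique marker that is an initial segment of $W_{2e}\cup B$ (resp.\ $W_{2e+1}\cup B$) at stage $\max B$ is $W_{2e}$ (resp.\ $W_{2e+1}$). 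Hence $NU(\mathcal{S})$ is not monochromatic.
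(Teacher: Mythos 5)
Your reduction of a $\Sigma_2$ set to a co-c.e.\ token presentation is fine, and so is the convergence argument for a single slot: once the lower slots have settled and $\mathcal{S}^{\lfloor i/2\rfloor}$ generates an IP set, the least alive eligible token is non-decreasing and bounded above by an always-alive token, hence convergent. The gap is in the phrase ``once every $W^s_j$ with $j<i$ has settled, say by stage $s_0$.'' Your stabilization claim only asserts settling for slots whose $\Sigma_2$ set generates an IP set, but the eligibility condition for slot $i$ quantifies over \emph{all} $j<i$. Suppose some $\mathcal{S}^{e'}$ with $2e'<i$ has no member above the already-settled markers below it (for instance $\mathcal{S}^{e'}=\emptyset$) while its approximation looks nonempty at every stage --- say the $k$-th token dies exactly at stage $k$. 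Then the least alive eligible token for slot $2e'$ increases without bound: $W^s_{2e'}$ is defined at every stage, never settles, and is not eventually undefined either, with $\max W^s_{2e'}\to\infty$. The constraint $\max W^s_{2e'}<\min W^s_i$ then forces every slot above $2e'$ to be redefined infinitely often, so no requirement with index above $e'$ is ever permanently satisfied, even if its $\Sigma_2$ set does generate an IP set. In the c.e.\ case the least candidate can only decrease and therefore stabilizes \emph{unconditionally}, which is what makes the induction go through; for $\Sigma_2$ sets a single divergent index injures everything above it infinitely often, and this is precisely where finite injury breaks down.

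The paper's construction is organized around this obstacle rather than around restoring convergence. Requirement $i$ is given $i+1$ candidate witnesses $T_{i,0},\ldots,T_{i,i}$; the candidacy conditions constrain a witness only relative to those lower witnesses $T_{j,m}$ that are actually defined, so non-convergent lower requirements are skipped rather than waited for; and the damage done by an index $j<i$ whose true witness is undefined but whose stage-approximation $T^{\min D,\max D}_{j,m}$ happens to be defined is controlled by a counting argument showing that each such $j$ can spoil at most one of the $i+1$ candidates. Hence at least one candidate survives and yields two elements of $NU(\phi_i)$ with different colors. To repair your argument you would need some analogue of this redundancy-plus-counting (or a priority-tree / true-stages device); as written, the construction fails for any $\Sigma_2$ set whose index lies above a divergent one.
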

\begin{proof}
Fix an enumeration of all $\Sigma_2$ formulas
\[\phi_i(Z)=\exists x\forall y R_i(x,y,Z).\]
We will sometimes conflate $\phi_i$ with $\{Z\in\B\mid\phi_i(Z)\}$ (for example, by writing $NU(\phi_i)$).

We arrange pairs $(i,n)$ with $n<i+1$ in lexicographic order (so $(j,m)<(i,n)$ iff $j<i$ or $j=i$ and $m<n$).  For each pair $(i,n)$, we define the $i,n$-candidates and $T_{i,n}$, the $i,n$-witness, simultaneously by induction.

We will now define the key building blocks of our argument, the \emph{candidates} and \emph{witnesses}.  The main point of an $i$-candidate is that it will satisfy $\phi_i$; a secondary point is that its smallest element is largest enough to give bounds on the existential quantifiers needed to justify all the earlier witnesses.  In other words, a candidate should ``see'' all the earlier witnesses.  A witness, in turn, is just the smallest candidate.  (We could dispense with the notion of a candidate, and discuss only witnesses; the notion of a candidate is used to simplify the proofs of some claims.)

\begin{definition}
  $T$ is an $i,n$-candidate if:
  \begin{itemize}
  \item $\phi_i(T)$
  \item For each $(j,m)<(i,n)$ such that the least $j,m$-candidate $T_{j,m}$ is defined, $\exists x\leq \min T\forall y R_j(x,y,T_{j,m})$
    \item For all $(j,m)<(i,n)$ such that the least $j,m$-candidate $T_{j,m}$ is defined, $\max T_{j,m}<\min T$
  \end{itemize}

  We define $T_{i,n}$, the $i,n$-witness, to be the least $i,n$-candidate if there is one, and undefined otherwise.
\end{definition}
Note that if $\phi_i$ generates an IP set then all the $i,n$-witnesses are defined.

We will also need certain approximations to the $i,n$-witnesses.
\begin{definition}
  Let integers $p,q$ be given.    $T$ is a $p,q,i,n$-candidate if:
  \begin{itemize}
    \item $\max T<p$
  \item $\exists x\leq p\forall y\leq q R_i(x,y,T)$
  \item For all $(j,m)<(i,n)$ such that the least $p,q,j,m$-candidate $T^{p,q}_{j,m}$ is defined, $T^{\min T,q}_{j,m}=T^{p,q}_{j,m}$
    \item For all $(j,m)<(i,n)$ such that the least $p,q,j,m$-candidate $T^{p,q}_{j,m}$ is defined, $\max T^{p,q}_{j,m}<\min T$
  \end{itemize}

  We define $T^{p,q}_{i,n}$, the $p,q,i,n$-witness, to be the least $p,q,i,n$-candidate if there is one, and undefined otherwise.
\end{definition}
Note that there are only finitely many sets with $\max T<p$, and therefore only finitely many possible candidates for $T^{p,q}_{i,n}$; in particular, the set of $p,q,i,n$ such that $T^{p,q}_{i,n}$ exists is computable, and $T^{p,q}_{i,n}$ can be computed from $p,q,i,n$.

\begin{claim}
  If $p\leq p'$ and $T^{p,q}_{j,m}=T^{p',q}_{j,m}$ for all $(j,m)<(i,n)$ then $T^{p',q}_{i,n}\preceq T^{p,q}_{i,n}$.
\end{claim}
\begin{claimproof}
  It suffices to show that $T^{p,q}_{i,n}$ is a $p',q,i,n$-candidate.  Certainly if $\exists x\leq p \forall y\leq q R_i(x,y,T^{p,q}_{i,n})$ then there is such an $x\leq p'$ as well.  The remaining conditions hold by assumption.
\end{claimproof}

\begin{claim}
  If $p\leq p'\leq p''$ and $T^{p,q}_{j,m}=T^{p'',q}_{j,m}$ for all $(j,m)<(i,n)$ then $T^{p',q}_{j,m}=T^{p,q}_{j,m}$ for all $(j,m)\leq (i,n)$.
\end{claim}
\begin{claimproof}
  Suppose not.  Let $(j,m)$ be least such that $T^{p,q}_{j,m}\neq T^{p',q}_{j,m}$.  Applying the preceeding lemma to $p,p'$ and to $p',p''$, we have $T^{p',q}_{j,m}\prec T^{p,q}_{j,m}=T^{p'',q}_{j,m}\prec T^{p',q}_{j,m}$, which is impossible.
\end{claimproof}

We define a coloring of $\B$ as follows.  Let $B\in \B$ be given with $\max B=s$; we may assume $c(B')$ is decided for all $B'$ with $\max B'<s$ and for all proper final segments of $B$.  We will attempt to color $B$ in a series of stages, indexed by $i\leq s$.  At stage $i$, we ask whether there exist $A,D$ such that:
\begin{itemize}
\item $A\cup D=B$,
\item $\max A<\min D$, and
\item $A=T^{\min D,\max D}_{i,n}$ for some $n<i+1$.
\end{itemize}
If all these conditions are met, we set $c(B)=1-c(D)$ for the longest such $D$, and say that $B$ is $i,A,D$-colored.  Otherwise, we do not color $B$ at stage $i$.  This is computable since there are only finitely many possible divisions $B=A\cup D$ which need to be checked, and checking if $A=T^{\min D,\max D}_{i,n}$ is computable.

If $B$ is not colored at any stage $i\leq s$, we arbitrarily set $c(B)=0$.

For each $i$, we wish to show that if $\phi_i$ generates an IP set then $c$ is not monochromatic on $FU(\phi_i)$.  So suppose $\phi_i$ generates an IP set.  Choose $p$ such that for each $T'\preceq T_{i,i}$, if $\exists x\forall y R_i(x,y,T')$ then $\exists x\leq p\forall y R_i(x,y,T')$.  Since $\phi_i$ generates an IP set, we may find an $A$ with $\min A\geq p$ and $\phi_i(A)$.  Now let $q$ be large enough that for each $j\leq i$, each $T'\preceq T_{i,i}$ such that $\neg\phi_j(T')$, and each $x\leq \min A$, there is a  $y\leq q$ such that $\neg R_j(x,y,T')$.  Again we may find $B$ such that $\phi_i(B)$ and $\max B\geq q$.  In particular, when $j\leq i$, $T'\preceq T_{i,i}$, $\exists x\forall y R_j(x,y,T')$ holds iff $\exists x\leq \min A\forall y\leq \max B R_j(x,y,T')$ holds, and therefore $T_{j,m}=T^{\min A,\max B}_{j,m}$ for all $(j,m)\leq (i,i)$.

We will show that for some $n<i+1$, $T_{i,n}\cup A\cup B$ is $i,T_{i,n},A\cup B$-colored.  This means $c(T_{i,n}\cup A\cup B)\neq c(A\cup B)$, and therefore $NU(\phi_i)$ is not monochromatic.  Since $T_{i,n}=T^{\min A,\max B}_{i,n}$, it suffices to show that for some $n<i+1$, $T_{i,n}\cup A\cup B$ is not $j,T',D$-colored for any $j<i$ with $T'\neq T_{i,n}$ or $i,T',D$-colored for any $T'$ a proper initial segment of $T'_{i,n}$.

\begin{claim}
  If $T'$ is a proper initial segment of $T_{i,n}$ and $j<i$ then $T_{i,n}\cup A\cup B$ is not $j,T',D$-colored, where $T'\cup D=T_{i,n}\cup A\cup B$.
\end{claim}
\begin{claimproof}
  Since $T_{j',m}^{\min T',\max D}=T_{j',m}^{\min A,\max D}=T_{j',m}$ for all $(j',m)<(i,n)$ and $\min T'\leq \min D\leq \min A$, it follows that $T_{j',m}^{\min D,\max D}=T_{j',m}$ for all $(j',m)<(i,n)$.  In particular, since $T'$ is a proper initial segment of $T_{i,n}$, we cannot have $T'=T_{j,m}$ for any $m$.  Therefore $T_{i,n}\cup A\cup B$ is not $j,T',D$-colored.
\end{claimproof}

\begin{claim}
  If $T'$ is a proper initial segment of $T_{i,n}$ and $j<i$ then $T_{i,n}\cup A\cup B$ is not $i,T',D$-colored, where $T'\cup D=T_{i,n}\cup A\cup B$.
\end{claim}
\begin{claimproof}
  If $\phi_i(T')$ then $T'$ would be an $i,n$-candidate with $T'\prec T_{i,n}$, contradicting leastness of $T_{i,n}$.  So $\neg\phi_i(T')$, and therefore $\forall x\leq \min A\exists y\leq \max B\neg R_j(x,y,T')$.  Since $\max B=\max D$ and $\min D\leq \min A$, also $\forall x\leq \min D\exists y\leq \max D\neg R_j(x,y,T')$, so $T'$ cannot be $T^{\min D,\max D}_{i,m}$ for any $m$.
\end{claimproof}

It is still possible for $T_{i,n}\cup A\cup B$ to be $j,T',D$-colored by some $j<i$ when $T'$ is a proper end-extension of $T_{i,n}$.  We will show that each $j$ does so for at most one $n<i+1$.  

\begin{claim}
  If $j<i$ and $T_{i,n}\cup A\cup B$ is $j,T',D$-colored where $T'$ is a proper end-extension of $T_{i,n}$ then $T'=T^{\min D,\max D}_{j,m}$ is least such that $T_{j,m}$ is undefined.
\end{claim}
\begin{claimproof}
  By definition, $T'=T^{\min D,\max D}_{j,m}$ for some $m<j+1$.  If $T_{j,m'}$ is defined for some $m'<j+1$ then, since $\min A\leq \min D$, $T_{j,m'}=T^{\min D,\max D}_{j,m'}\neq T'$.  If $m'<m$ is such that $T_{j,m'}$ is undefined, in order for $T'\cup D$ to be $j,T',D$-colored, we would have to have $T_{j,m'}^{\min T',\max D}=T_{j,m'}^{\min D,\max D}$.  But $\min T'=\min T_{i,n}$ and $\max D=\max B$, so $T_{j,m'}^{\min T',\max D}=T_{j,m'}^{\min T_{i,n},\max B}$ is undefined.  Therefore $m$ is least such that $T_{j,m'}$ is undefined.
\end{claimproof}

So suppose there are distinct $n,n'<i+1$ such that $T_{i,n}\cup A\cup B$ is $j,T',D'$-colored while $T_{i,n'}\cup A\cup B$ is $j,T'', D''$-colored.  Without loss of generality, assume $T'\prec T''$.  Then $\max D'=\max D''=\max B$ and $\min A\leq \min D'\leq \min D''$.  Let $m$ be least such that $T_{j,m}$ is undefined.  Then $T'$ is a $\min D'',\max D'',j,m$-candidate.  Since $T'\prec T''$, it follows that $T''$ cannot be $T^{\min D'',\max D''}_{j,m}$.

Therefore for each $j$,  there is at most one $n$ such that $T_{i,n}\cup A\cup B$ is $j,T',D$-colored.  This means there are at most $i$ choices of $n$ such that $T_{i,n}\cup A\cup B$ is $j,T',D$-colored for any $j<i$, and since there are $i+1$ possible values for $n$, there is some $n$ such that $T_{i,n}\cup A\cup B$ is not $j,T',D$-colored for any $j<i$, and therefore $T_{i,n}\cup A\cup B$ is $i,T_{i,n},A\cup B$-colored, as desired.
\end{proof}

\section{Conclusion}
The results of the previous section still leave a significant gap in the strength of Hindman's Theorem; in particular, while we do not see how to prove Hindman's Theorem in $\mathbf{ACA_0}$, we cannot rule out the possibility that there is such a proof.

\bibliographystyle{jflnat}
\bibliography{../../Bibliographies/main}

\begin{thebibliography}{8}
\expandafter\ifx\csname natexlab\endcsname\relax\def\natexlab#1{#1}\fi
\def\docolon{:}
\def\eatcomma#1{}
\def\onlyone#1{\gdef\oneletter{#1}}
\def\sphref#1#2{{\let\#=\docolon\xdef\one{#1}}\href{\one}{#2}}
\def\zhref#1,#2{{\let\#=\docolon\xdef\one{#1}}\href{\one}{#2}}
\expandafter\ifx\csname url\endcsname\relax
  \def\url#1{{\tt #1}}\fi
\newcommand{\enquote}[2]{``#1,''}

\bibitem[Baumgartner(1974)]{baumgartner74}
Baumgartner, J.~E.,
\newblock \enquote{A short proof of {H}indman's theorem},
\newblock {\em J. Combinatorial Theory Ser. A}, vol.~17 (1974),
  pp.~384--386.\eatcomma.
  \zhref{http://www.emis.de/cgi-bin/MATH-item?0289.05009}, {\hbox{Zbl
  0289.05009}}. \sphref{http://www.ams.org/mathscinet-getitem?mr=0354394 (50
  \#6873)}{\hbox{MR 0354394 (50 \#6873)}}.

\bibitem[Blass et~al.(1987)Blass, Hirst, and Simpson]{blass87}
Blass, A.~R., J.~L. Hirst \unskip, and  S.~G. Simpson,
\newblock \enquote{Logical analysis of some theorems of combinatorics and
  topological dynamics},
\newblock pp.~125--156\eatcomma,  in {\em Logic and combinatorics ({A}rcata,
  {C}alif., 1985)}, volume~65 of {\em Contemp. Math.},
\newblock Amer. Math. Soc., Providence, RI, 1987.
  \zhref{http://www.emis.de/cgi-bin/MATH-item?0652.03040}, {\hbox{Zbl
  0652.03040}}. \sphref{http://www.ams.org/mathscinet-getitem?mr=891245
  (88d:03113)}{\hbox{MR 891245 (88d:03113)}}.

\bibitem[Comfort(1977)]{comfort77}
Comfort, W.~W.,
\newblock \enquote{Ultrafilters: some odd and some new results},
\newblock {\em Bull. Amer. Math. Soc.}, vol.~83 (1977), pp.~417--455.\eatcomma.
  \zhref{http://www.emis.de/cgi-bin/MATH-item?0355.54005}, {\hbox{Zbl
  0355.54005}}. \sphref{http://www.ams.org/mathscinet-getitem?mr=0454893 (56
  \#13136)}{\hbox{MR 0454893 (56 \#13136)}}.

\bibitem[Hindman(1974)]{hindman74}
Hindman, N.,
\newblock \enquote{Finite sums from sequences within cells of a partition of
  {$N$}},
\newblock {\em J. Combinatorial Theory Ser. A}, vol.~17 (1974),
  pp.~1--11.\eatcomma. \zhref{http://www.emis.de/cgi-bin/MATH-item?0285.05012},
  {\hbox{Zbl 0285.05012}}.
  \sphref{http://www.ams.org/mathscinet-getitem?mr=0349574 (50
  \#2067)}{\hbox{MR 0349574 (50 \#2067)}}.

\bibitem[Hindman(2005)]{Hindman2005}
Hindman, N.,
\newblock \enquote{Algebra in the {S}tone-\v {C}ech compactification and its
  applications to {R}amsey theory},
\newblock {\em Sci. Math. Jpn.}, vol.~62 (2005), pp.~321--329.\eatcomma.
  \zhref{http://www.emis.de/cgi-bin/MATH-item?1084.22001}, {\hbox{Zbl
  1084.22001}}. \sphref{http://www.ams.org/mathscinet-getitem?mr=2179958
  (2007e:54033)}{\hbox{MR 2179958 (2007e:54033)}}.

\bibitem[Hindman and Strauss(1998)]{hindman98}
Hindman, N. \unskip, and  D.~Strauss,
\newblock {\em Algebra in the {S}tone-\v {C}ech compactification}, volume~27 of
  {\em de Gruyter Expositions in Mathematics},
\newblock Walter de Gruyter \& Co., Berlin, 1998.\eatcomma,
\newblock Theory and applications.
  \zhref{http://www.emis.de/cgi-bin/MATH-item?0918.22001}, {\hbox{Zbl
  0918.22001}}. \sphref{http://www.ams.org/mathscinet-getitem?mr=1642231
  (99j:54001)}{\hbox{MR 1642231 (99j:54001)}}.

\bibitem[Simpson(1999)]{simpson99}
Simpson, S.~G.,
\newblock {\em Subsystems of second order arithmetic},
\newblock Perspectives in Mathematical Logic. Springer-Verlag, Berlin,
  1999.\eatcomma,
\newblock \zhref{http://www.emis.de/cgi-bin/MATH-item?1181.03001}, {\hbox{Zbl
  1181.03001}}. \sphref{http://www.ams.org/mathscinet-getitem?mr=1723993
  (2001i:03126)}{\hbox{MR 1723993 (2001i:03126)}}.

\bibitem[Towsner(2010)]{towsner09:hindman_unwind}
Towsner, H.,
\newblock \enquote{Hindman's theorem: An ultrafilter argument in second order
  arithmetic},
\newblock {\em Journal of Symbolic Logic},  (2010).\eatcomma,
\newblock URL \url{http://arxiv.org/abs/0906.3882}. (accepted).

\end{thebibliography}

\end{document}